\documentclass[reqno]{amsart}

\usepackage{mathtools}
\usepackage[T1]{fontenc}
\usepackage{lmodern}
\usepackage{enumitem}
\usepackage{hyperref}
\hypersetup{colorlinks=true,linkcolor=blue,citecolor=blue,urlcolor=blue}

\UseRawInputEncoding
\usepackage{tikz}
\usepackage{amsmath, amsfonts,amssymb,amsthm,amscd,latexsym,cite}
\usepackage{mathrsfs}
\usepackage{enumitem}
\usepackage{color}
\usepackage{verbatim}
\newtheorem{thm}{Theorem}[section]
\newtheorem{theorem}[thm]{Theorem}

\newtheorem{lemma}[thm]{Lemma}

\newtheorem{corollary}[thm]{Corollary}
\newtheorem{proposition}[thm]{Proposition}

\newtheorem{remark}[thm]{Remark}

\newcommand{\cA}{{\mathcal A}}
\newcommand{\cB}{{\mathcal B}}

\newcommand{\cD}{{\mathcal D}}

\newcommand{\cF}{{\mathcal F}}

\newcommand{\cH}{{\mathcal H}}

\newcommand{\cM}{{\mathcal M}}
\newcommand{\cN}{{\mathcal N}}

\newcommand{\cP}{{\mathcal P}}

\newcommand{\tx}{\tilde{x}}
\newcommand{\tM}{\tilde{\mathcal{M}}}
\newcommand{\ttau}{\tilde{\tau}}

\setlist[enumerate,1]{label=\textup{(\roman*)}}

 \usepackage{color}
  \newcommand{\Tr}{\mathrm{Tr}}
    
 \newcommand{\norm}[1]{\left\lVert#1\right\rVert}
  \newcommand{\cNorm}[1]{\left\lVert#1\right\rVert}
\usepackage{hyperref}					
\hypersetup{colorlinks,
	linkcolor=blue,%
	citecolor=blue}

 \usepackage{float}
\begin{document}

\title[Jensen' s trace inequality with equality condition and noncommutative Lamperti's theorem]{Jensen' s trace inequality with equality condition and noncommutative Lamperti's theorem}

\author[K. Fang]{Kai Fang}
\author[X. He]{Xin He}
\author[J. Huang]{Jinghao Huang}

\address{Institute for  Advanced Study in  Mathematics of HIT, Harbin Institute of Technology, Harbin, 150001, China}
\email{{\color{blue}kaifang.8.25@gmail.com; jinghao.huang@hit.edu.cn}}

\address{Department of Mathematics, Harbin Normal University, Harbin, 150001, China}
\email{{\color{blue}hexin8323@163.com}}

\thanks{K. Fang and J. Huang were supported the NNSF of China (No. 12031004, 12301160, 12471134 and 12671159);
X. He was supported the NNSF of China (No. 12471129)}

\subjclass[2020]{46B04; 46E30; 46L10; 47A63}
	\keywords{Jensen's trace inequality; Orlicz space; isometry.}

\begin{abstract}

Let $\mathcal M$ be a semifinite von Neumann algebra equipped with a semifinite faithful normal trace $\tau$. We establish Jensen's trace inequality in full generality and characterize its equality case, 
which answers  two questions raised in 
[Kosaki2013] and [HaradaKosaki2008]. 
As an application, we derive
noncommutative Lamperti-type inequalities and their
equality conditions. 
Employing this result, we characterize linear isometries  (not necessarily surjective) on a class of
$F$-normed noncommutative  Orlicz spaces, which
provides a noncommutative Lamperti's
theorem for linear isometries. 

\end{abstract}
\maketitle
\section{Introduction}

\subsection{Equality condition  for Jensen's trace inequality}

Jensen's inequality, originating from the classical theory of convex 
functions \cite{Jensen1906}, has played a fundamental role in matrix 
analysis and operator theory. Its noncommutative extension is closely 
related to the theory of operator convex functions. Davis 
\cite{Davis1957} characterized operator convexity by Jensen's inequality under orthogonal compressions. 
Choi \cite{Choi1974} extended Davis's compression inequality to
unital positive linear mappings between $C^{*}$-algebras, establishing
the corresponding Jensen-type inequality for operator convex functions.
Pedersen \cite{HansenPedersen1982} further developed Jensen's operator 
inequality and showed that, for an operator convex function $f$,
\[
 f\left(\sum_{i=1}^{n} a_i^{*}x_i a_i\right)
 \leq
 \sum_{i=1}^{n} a_i^{*}f(x_i)a_i,
 \quad
 \sum_{i=1}^{n}a_i^{*}a_i=\mathbf{1}.
\]

A significant feature of the trace version of Jensen's inequality is 
that operator convexity can often be replaced by ordinary convexity. 
In the setting of semifinite von Neumann algebras equipped with a semifinite faithful normal trace $\tau$, Brown and Kosaki 
\cite{BrownKosaki} proved that
\begin{align}\label{eq:Jensen's trace inequality}
 \tau\left(f(a^{*}xa)\right)
 \leq
 \tau\left(a^{*}f(x)a\right)
\end{align}
for suitable convex functions $f$, positive operators $x$ (not necessarily bounded), and 
contractions $a$ (i.e., $\norm{a}_\infty\le 1$) whenever both sides are well-defined. 
Petz \cite{Petz1987} studied a related Jensen's trace 
inequality for positive contractive linear mappings between operator algebras.
Hansen and Pedersen \cite{HansenPedersen2003} subsequently formulated
the Jensen's trace inequality for noncommutative convex combinations in
$C^{*}$-algebras with a finite trace.
Harada and Kosaki \cite{HaradaKosaki2010} established Jensen's trace inequality for semibounded self-adjoint $\tau$-measurable operators and observed that the semiboundedness assumption can be removed in certain special cases. Motivated by this, Kosaki \cite{Kosaki2013} subsequently posed and investigated the question (see \cite[p. 2]{Kosaki2013})
\begin{quote} whether the trace inequality \eqref{eq:Jensen's trace inequality} is
valid for a self-adjoint $\tau$-measurable operator $x$ under suitable integrability requirement (guaranteeing the well-definedness of the above both sides).
\end{quote}
However, Kosaki \cite{Kosaki2013}
  obtained affirmative results only under additional assumptions. Thus, the problem remains unresolved in full generality.
The recent results  
\cite{CarlenFrankLarson2025, RahamanTurowska2026} 
concern other extensions of Jensen's inequality and did not solve this problem.

The equality problem has also received considerable attention. For the operator Jensen's inequality, Petz
\cite{Petz1986} proved that equality for a non-affine operator convex
function at a self-adjoint element $x$ holds if and only if the
underlying unital positive mapping is multiplicative on the unital
$C^{*}$-subalgebra $C^{*}(x,1)$ generated by $x$.
In the 
trace setting, Harada and Kosaki \cite{HaradaKosaki} showed, under 
appropriate regularity and structural assumptions on $f$, 
\[
 \tau\left(f(a^{*}xa)\right)
 =
 \tau\left(a^{*}f(x)a\right)
 \Longleftrightarrow
  a^{*}x^{2}a=(a^{*}xa)^{2}.
\]
Nevertheless, it remains  open (see \cite[p. 482]{HaradaKosaki}) 
    \begin{quote} whether this equality characterization remains valid under ordinary strict convexity beyond the cases of matrices and compact operators.
    \end{quote}

In the present paper, we establish Jensen's trace inequality in full generality and characterize its equality case,
which answers the questions raised in \cite{Kosaki2013, HaradaKosaki} in the affirmative.
\begin{theorem}\label{equality condition}
Assume that $\cM$ is a  semifinite von Neumann algebra equipped with a semifinite faithful normal trace $\tau$.
Let $a\in \cM$ be a contraction and let $x\in S(\cM,\tau)_h$.
Suppose that $I$ is a non-degenerate interval containing $0$ and $\sigma(x),\sigma(a^*xa)$.
If $f:I\to\mathbb R$ is convex and continuous
such that $f(0)=0$ and $a^*f(x)a,f(a^*xa)\in L_1(\cM,\tau)$,
then
\[
\tau(a^*f(x)a)\ge \tau(f(a^*xa)).
\]
In the case where $f$ is strictly convex, equality holds if and only if $a^*x^2a=(a^*xa)^2$.
\end{theorem}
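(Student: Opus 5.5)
We plan to reduce the statement to the classical scalar Jensen inequality through a unitary dilation combined with spectral-measure arguments. Split $f=f_++f_-$ with $f_+=f\cdot\chi_{[0,\infty)\cap I}$ and $f_-=f\cdot\chi_{(-\infty,0]\cap I}$. Since $f$ is convex with $f(0)=0$, $f$ is bounded below by a line $\ell(t)=\beta t$ through the origin (take $\beta$ a subgradient at $0$). Replacing $f$ by $g=f-\ell$ gives a convex function $g\ge 0$ with $g(0)=0$, and both sides change by the same amount:
\[
\tau(a^*\ell(x)a)=\beta\tau(a^*xa)=\tau(\ell(a^*xa)).
\]
The difficulty is that $a^*xa$ need not be in $L_1$, so this identity cannot be used naively. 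We will therefore cut off. Let $e_n=\chi_{[-n,n]}(x)$ and $x_n=xe_n$. In addition, cut the spectrum of $x$ away from the region near $0$ where $x$ may fail to be $\tau$-finite: use $p_{\epsilon}=\chi_{|x|>\epsilon}(x)$, which has finite trace because $x$ is $\tau$-measurable and $\epsilon>0$. Then $x_{n,\epsilon}=xe_np_\epsilon\in L_1\cap\cM$, and for such bounded integrable $x$ the inequality $\tau(g(a^*x_{n,\epsilon}a))\le\tau(a^*g(x_{n,\epsilon})a)$ follows from the Brown--Kosaki result for semibounded operators applied to a nonnegative convex $g$ vanishing at $0$. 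The subtraction of $\ell$ is now legitimate. Passing to limits $n\to\infty$, $\epsilon\to0$ should use lower semicontinuity of $y\mapsto\tau(g(y))$ under measure-topology convergence on the left (Fatou for generalized singular values, since $g\ge0$), and monotone convergence on the right, because $g(x_{n,\epsilon})=g(x)\chi_{\{\epsilon<|x|\le n\}}(x)\uparrow g(x)$. Finally we need to return from $g$ to $f$, that is, show $\tau(a^*xa)$ makes sense. The hypothesis $a^*f(x)a\in L_1$ together with $f\ge\ell$ controls only one side. I would instead work with the two parts $g_\pm$ and linear minorants with different slopes at $0^\pm$, proving the inequality separately for $x_+$ and $x_-$ pieces. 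The subtle point is that $a^*xa$ does not split as $a^*x_+a-a^*x_-a$ in a way compatible with $f$. This is exactly the obstacle Kosaki faced.

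The cleaner route, which I would try first, is the dilation trick. Write the unitary
\[
U=\begin{pmatrix}a&(1-aa^*)^{1/2}\\(1-a^*a)^{1/2}&-a^*\end{pmatrix}
\]
in $M_2(\cM)$, and set $X=x\oplus0$. Then $U^*XU$ has $(1,1)$-corner $a^*xa$. With $E$ the conditional expectation onto the diagonal, $E(U^*XU)=a^*xa\oplus y$ for some $y$, and
\[
\tau_2(f(U^*XU))=\tau(f(x))
\]
formally. Instead of this global identity, which needs $f(x)\in L_1$, we use the corner $a^*f(x)a=(U^*f(X)U)_{11}$. The key inequality is then the pinching inequality $\tau(f(Pz P))\le\tau(Pf(z)P)$ for the projection $P=1\oplus0$ and self-adjoint $\tau$-measurable $z=U^*XU$. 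This inequality we prove directly via the spectral theorem. Let $Pz P=\int\lambda\,de_\lambda$ on $P$, and define the spectral measure $\mu_\xi$ of $z$ at vectors. In the trace setting we use the identity
\[
\tau(Pf(z)P\,q)\ge\tau(f(\tau\text{-average}))\quad\text{on spectral projections } q=e(\Delta)\le P
\]
with $\tau(q)<\infty$. Concretely, the normalized functional $\phi_q(w)=\tau(qwq)/\tau(q)$ is a state, and scalar Jensen gives $\phi_q(f(z))\ge f(\phi_q(z))$. Summing over a partition of $\mathbb R\setminus\{0\}$ into small intervals $\Delta$ yields a Riemann-type lower bound for $\tau(f(PzP))$ restricted to $\{|PzP|>\epsilon\}$. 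Refining the partitions and applying monotone/dominated convergence gives the inequality. Here $f(0)=0$ handles the kernel, and the $L_1$ hypotheses guarantee absolute convergence of the sums. This approach avoids any integrability of $x$ itself. I expect this step to be the main obstacle: making the Riemann-sum limit rigorous when $f$ is only continuous and $PzP$ is unbounded.

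For the equality case, assume $f$ is strictly convex and equality holds. Scalar Jensen is strict unless the state $\phi_q$ restricted to $C^*(z)$ is a point mass. Tracking the defect through the partition argument, equality forces, for every spectral projection $q$ of $a^*xa$ with finite trace, that the compressed spectral measure $B\mapsto qPe^z(B)Pq$ be concentrated where $z=\lambda$ on $q$. This gives $\|(z-PzP)Pq\xi\|=0$ in $L_2$, hence $(1-P)zP=0$. Translating back, $(1-P)zP=0$ means $P$ commutes with $z$, so $Pz^2P=(PzP)^2$. Reading the corner gives $a^*x^2a=(a^*xa)^2$. Conversely, if $a^*x^2a=(a^*xa)^2$, then $z$ commutes with $P$, so $f(z)P=f(PzP)P$ and equality is immediate.
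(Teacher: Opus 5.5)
\paragraph{Reduction and the inequality.} Your reduction is sound and is the paper's in different form. With the Halmos unitary $U$, $P=\mathbf 1\oplus 0$ and $z=U^*(x\oplus 0)U$ one gets $PzP=a^*xa\oplus 0$, $Pf(z)P=a^*f(x)a\oplus 0$ and $f(PzP)=f(a^*xa)\oplus 0$. This is exactly what the paper obtains from the isometry $v$ and the projection $e=vv^*$. Your converse direction of the equality statement is also fine.

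For the inequality, you apply scalar Jensen to the normalized trace states $\phi_q$ on $\tau$-finite spectral projections $q=e^{PzP}(\Delta)$ and sum Riemann-style. This is a legitimate and more elementary substitute for the paper's probability kernel. It works provided you run Jensen for $\phi_q$ through affine minorants $l\le f$, using $q(f-l)(z)q\ge 0$, since $q|z|q$ need not be integrable.

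However, the finiteness you rely on is false, both in your first route and implicitly when you partition $\{|PzP|>\epsilon\}$. $\tau$-measurability gives $\tau(e^{|y|}(\lambda,\infty))<\infty$ only for large $\lambda$, not for every $\lambda>0$; the latter is $\tau$-compactness. For example, take $f(t)=t^2-t$ and $a=x=\mathbf 1$ in a factor with $\tau(\mathbf 1)=\infty$. All hypotheses hold, yet $\{|PzP|>\epsilon\}=P$ has infinite trace.

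Finiteness of $\tau(q_\Delta)$ can only be extracted from $f(a^*xa)\in L_1$, and only for $\Delta$ away from the zero set of $f$. The fix is:
\begin{itemize}
\item exhaust the eigenprojections of $a^*xa$ at the (at most two) zeros of $f$ by $\tau$-finite subprojections on which $a^*xa$ is scalar;
\item in the case $f\ge 0$, simply drop the zero-set pieces;
\item take the limit via Lemma \ref{lem:L1cut}.
\end{itemize}
This is what Lemma \ref{lem:exhaust} and Proposition \ref{prop:semifinite} do, so this part is repairable.

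\paragraph{The gap: the equality case.} This is exactly the question the theorem settles. Fix a single piece $q=q_\Delta$ and set $c_\Delta=\tau(qzq)/\tau(q)$. Scalar Jensen shows that both $\tau(qf(z)q)$ and $\tau(f(PzP)q)$ dominate $\tau(q)f(c_\Delta)$, so it gives no comparison between them on a single piece.

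Worse, the Jensen defect $\tau(qf(z)q)-\tau(q)f(c_\Delta)$ is in general strictly positive even when $P$ does commute with $z$. In that case the law of $z$ under $\phi_q$ is the normalized spectral measure of $a^*xa$ on $\Delta$, which is not a point mass.

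Equality of traces, once transferred to each $\tau$-finite piece $Q$ via additivity and nonnegativity of $\tau(Qf(z)Q)-\tau(f(PzP)Q)$ (a step you also do not address), only tells you that the \emph{sum} of these defects over a partition tends to $0$ as the mesh tends to $0$. Consequently:
\begin{itemize}
\item the equality case of scalar Jensen cannot be invoked for any individual $q$;
\item ``concentrated where $z=\lambda$ on $q$'' has no content for continuous spectrum;
\item your $L_2$ identity presupposes $zq\in L_2$, which is unavailable for unbounded $x$.
\end{itemize}

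What is missing is a mechanism for passing to the infinitely fine limit. The paper does this by conditioning on all of $W^*(exe)$. Lemma \ref{lem:kernel} gives a probability kernel with $\Psi(E_{\cA}(eh(x)e))(\omega)=\int h\,d\nu_\omega$. The trace inequality then becomes the pointwise inequality $\int f\,d\nu_\omega\ge f(\Psi(exe)(\omega))$, and equality of traces forces equality a.e. Strict convexity then gives $\nu_\omega=\delta_{\Psi(exe)(\omega)}$, hence $E_{\cA}(e\chi_B(x)e)=\chi_B(exe)$. So $e\chi_B(x)e$ is a projection, and $e$ commutes with every $\chi_B(x)$.

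To stay within your discrete scheme you would instead need three further steps:
\begin{itemize}
\item a uniform strict-convexity bound on compact subintervals, of the form $f(t)-f(c)-f'_+(c)(t-c)\ge\kappa_\eta>0$ whenever $|t-c|\ge\eta$;
\item use of this bound along refinements to prove $q_\Delta\le e^z(\overline{\Delta})$ for every $\Delta$;
\item only then the deduction $zQ=PzPQ$, hence $(\mathbf 1-P)zP=0$.
\end{itemize}
None of this appears in your proposal.
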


To prove this theorem, we first establish the inequality and  its equality condition in the special case
where $a=e$ is a $\tau$-finite projection (see Proposition~\ref{prop:finite} below), and then extend the result
to the general case. 
A key ingredient is a new scalar representation
in terms of a probability kernel, established in Lemma~\ref{lem:kernel}. 
In contrast to the results of Harada and Kosaki \cite{HaradaKosaki2010} and Kosaki \cite{Kosaki2013}, which require additional assumptions, 
this representation enables us to establish Jensen's trace inequality in full generality.
Moreover, this representation reduces the trace equality to the equality case of the classical scalar Jensen inequality, which allows us to prove 
the equality condition when \(f\) is strictly convex, without requiring \(f\) to have the composite form in Harada and Kosaki \cite{HaradaKosaki}.

As further consequences of Theorem \ref{equality condition}, Corollary~\ref{prop:two contractions} extends, in the setting of
self-adjoint $\tau$-measurable operators, the Jensen's trace inequality for
noncommutative convex combinations of Hansen and Pedersen \cite{HansenPedersen2003}, and provides
its equality characterization under ordinary strict convexity or strict
concavity. 
Moreover, Proposition~\ref{prop:eq con submajor} and its direct consequence, Proposition~\ref{prop:rk-equality}, provide the condition for equalities to hold in \cite[Proposition 4.6]{FackKosaki},
\cite[Proposition 15 and Proposition 16]{BrownKosaki} and \cite[Theorem 5.3]{DS},
which extends the result in ~\cite[Remark~11]{HaradaKosaki}.

\subsection{Noncommutative Lamperti-type results}

The equality cases of Lamperti-type inequalities (see \cite[Theorem 2.1]{Lamperti}) provide an important
tool for studying isometries of function spaces. 
More precisely,
Lamperti proved, under suitable strict convexity or
concavity assumptions on $t\mapsto\Phi\left(\sqrt{t}\right)$, that equality in
the corresponding Lamperti-type inequality holds if and only if the
functions involved are disjoint. 
This implies that every linear
modular-preserving mapping $T$ on a certain modular class preserves disjointness and consequently is of the elementary form
\[
(Tf)(t)=h(t)(T_1f)(t),
\]
where $h$ is a measurable function and $T_1$ is induced by
a regular set isomorphism (see \cite[Theorem~4.1]{Lamperti}).
For \(\Phi(t)=t^{p}\), 
the modular class is $L_p$ space and the result above provides  characterization of isometries on $L_p$ spaces (see \cite[Theorem 3.1]{Lamperti}).

In the noncommutative \(L^{p}\)-setting, Yeadon \cite{Yeadon}
gave the equality conditions for the noncommutative Clarkson's inequality and thereby obtained a characterization of isometries on noncommutative $L_p$ spaces.

In the present paper, we extend this approach to a certain class of  $F$-normed noncommutative
 Orlicz spaces $L_\Phi(\cM,\tau)$. Using 
Jensen's trace
inequality and its 
  equality condition  established above, we derive noncommutative Lamperti-type
inequalities and characterize their equality cases in terms of
disjointness.
\begin{theorem}\label{traceineq}\label{thm:lamperti}
Let $x,y\in S(\cM,\tau)$, and let
$\Phi:[0,\infty)\to[0,\infty)$ be continuous and strictly increasing with
$\Phi(0)=0$.  Denote
\[
        \psi(t):=\Phi\left(\sqrt t\right),
        \quad t\ge0.
\]
Assume all traces below are finite.
\begin{enumerate}[label={\rm(\alph*)}]
\item If $\psi$ is convex, then
\[
        \tau(\Phi(|x+y|))+\tau(\Phi(|x-y|))
        \ge
        2\tau(\Phi(|x|))+2\tau(\Phi(|y|)).
\]
In the case where $\psi$ is strictly convex,  equality holds if and only if $x^*y=0$ and $xy^*=0$.
\item If $\psi$ is concave, then the reverse inequality holds.
In the case where $\psi$ is strictly concave, equality holds if and only if $x^*y=0$ and $xy^*=0$.
\end{enumerate}
\end{theorem}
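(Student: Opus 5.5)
Following Yeadon's treatment of the noncommutative Clarkson inequality, I will reduce everything to Theorem~\ref{equality condition} applied to $\psi$ with a suitable contraction. Since $\Phi(|z|)=\psi(|z|^2)=\psi(z^*z)$, all terms become traces of $\psi$ evaluated at positive operators. Work in $M_2(\cM)$ with trace $\tau_2=\tau\otimes\Tr$. Put
\[
X=\begin{pmatrix} x & y\\ 0&0\end{pmatrix},\qquad X^*X=\begin{pmatrix} x^*x & x^*y\\ y^*x & y^*y\end{pmatrix},\qquad XX^*=\begin{pmatrix}xx^*+yy^* & 0\\0&0\end{pmatrix}.
\]
Let $u=\frac{1}{\sqrt2}\begin{pmatrix}1&1\\1&-1\end{pmatrix}$, a self-adjoint unitary. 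A computation gives
\[
u X^*X u=\tfrac12\begin{pmatrix}|x+y|^2 & *\\ * & |x-y|^2\end{pmatrix}.
\]
I will use two pinchings. Let $P$ be the projection $\mathrm{diag}(1,0)$-type compression, implemented by the diagonal pinching $E(z)=\frac12(z+wzw)$ with $w=\mathrm{diag}(1,-1)$.

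For (a), first treat the case $\psi$ convex and continuous with $\psi(0)=0$. Jensen for the pinching yields $\tau_2(\psi(E(z)))\le\tau_2(\psi(z))$ for $z\ge0$. I would derive this from Theorem~\ref{equality condition} by viewing $E$ as a diagonal compression: set $Z=\mathrm{diag}(z,wzw)$ in $M_4(\cM)$ and let $a$ be the column $\frac{1}{\sqrt2}(1,w)^T$, placed as a contraction in $M_4(\cM)$. Then $a^*Za=E(z)$ and $\tau(a^*\psi(Z)a)=\tau_2(\psi(z))$. Applying this to $z=uX^*Xu$, whose diagonal pinching is $\mathrm{diag}(|x+y|^2,|x-y|^2)/2$, is not directly what is needed, because of the factor $\tfrac12$. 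So instead I work with $2$-scaled quantities. With $z=X^*X$, we get $E(z)=\mathrm{diag}(|x|^2,|y|^2)$. The unitary invariance and $\tau_2(\psi(X^*X))=\tau_2(\psi(XX^*))=\tau(\psi(xx^*+yy^*))$ then give
\[
\tau(\Phi(|x|))+\tau(\Phi(|y|))\le\tau(\psi(|x^*|^2+|y^*|^2)).
\]
For the other side, use $Y=\begin{pmatrix}x+y&0\\x-y&0\end{pmatrix}/\sqrt{?}$ instead. The clean route is $W=\begin{pmatrix}x&y\\y&x\end{pmatrix}$. It is unitarily equivalent via $u$ to $\mathrm{diag}(x+y,x-y)$, so $\tau_2\Phi(|W|)$ equals the left-hand side. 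The pinching of $W^*W=\begin{pmatrix}x^*x+y^*y& *\\ *& x^*x+y^*y\end{pmatrix}$ gives
\[
\mathrm{LHS}\ge 2\tau(\psi(|x|^2+|y|^2)).
\]
Then superadditivity $\tau(\psi(A+B))\ge\tau(\psi(A))+\tau(\psi(B))$ for $A,B\ge0$ follows by applying the pinching Jensen to $\begin{pmatrix}A^{1/2}&B^{1/2}\end{pmatrix}^*(\cdot)$, as in the first display. Chaining these gives (a). General $\psi$ (convex, increasing, $\psi(0)=0$) is continuous on $[0,\infty)$, so the hypotheses of Theorem~\ref{equality condition} hold. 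Finiteness of the traces is assumed, giving the required $L_1$ conditions. Part (b) is the same argument with $-\psi$, which is convex.

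For the equality case with $\psi$ strictly convex, both inequalities must be equalities. By Theorem~\ref{equality condition}, equality in the step via $W$ forces $a^*Z^2a=(a^*Za)^2$, which means the off-diagonal block of $W^*W$ vanishes: $x^*y+y^*x=0$. Equality in superadditivity with $A=x^*x$, $B=y^*y$ forces $|x||y|=0$, i.e. $x^*x\,y^*y=0$. From these, $x^*y=0$ follows since $x^*y\,y^*x=x^*(yy^*)x$, and disjointness of the ranges of $|x|,|y|$ gives $\mathrm{supp}$-orthogonality of the right supports, hence $yx^*$... To get $xy^*=0$ as well, I apply the same argument to $x^*,y^*$, using $\tau\Phi(|z|)=\tau\Phi(|z^*|)$. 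Conversely, if $x^*y=0=xy^*$, then $|x\pm y|^2=|x|^2+|y|^2$ with $|x|\perp|y|$, so both sides are equal.

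The main obstacle is this equality bookkeeping: extracting exactly $x^*y=0$ and $xy^*=0$ from the equality conditions $a^*Z^2a=(a^*Za)^2$ in the two applications. In particular, it must be checked that the block computations correctly translate the kernel-type condition into vanishing off-diagonal terms.
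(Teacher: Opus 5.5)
Your chain of inequalities is the paper's. Since $W=u\,\mathrm{diag}(x+y,x-y)\,u$, you have $W^*W=u\,\mathrm{diag}(|x+y|^2,|x-y|^2)\,u$. Its diagonal pinching is $\mathrm{diag}\bigl(|x|^2+|y|^2,|x|^2+|y|^2\bigr)$ and its off-diagonal entry is $x^*y+y^*x=\tfrac12\bigl(|x+y|^2-|x-y|^2\bigr)$. So your $W$-step is exactly Proposition~\ref{lem:strict-jensen} with $a=|x+y|^2$, $b=|x-y|^2$, and your row-operator step is Proposition~\ref{prop:rk-equality}. The two equality conditions you extract, $x^*y+y^*x=0$ and $|x||y|=0$, are precisely the paper's $a=b$ and $|x|^2|y|^2=0$. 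The only difference is how you obtain the two auxiliary results and their equality cases. You derive both directly from Theorem~\ref{equality condition} by $2\times2$ pinching, using $\tau_2(\psi(T^*T))=\tau_2(\psi(TT^*))$ for superadditivity. The paper instead goes through Corollary~\ref{prop:two contractions} and the submajorization route of Proposition~\ref{prop:eq con submajor}. Yours is a legitimate and somewhat more self-contained way to get them.

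Two points need repair.

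\emph{The dilation as written does not produce the pinching.} With $Z=\mathrm{diag}(z,wzw)$ and $a=\tfrac1{\sqrt2}(\mathbf 1,w)^T$ you get $a^*Za=\tfrac12(z+w\,wzw\,w)=z$. Take $Z=\mathrm{diag}(z,z)$ instead; equivalently, use Corollary~\ref{prop:two contractions} with $x_1=x_2=z$, $a_1=\mathbf 1/\sqrt2$, $a_2=w/\sqrt2$. The equality condition then reads $E(z^2)=E(z)^2$, i.e.\ $\mathrm{diag}(CC^*,C^*C)=0$ for the off-diagonal block $C$ of $z$, which gives $C=0$ as you want.

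\emph{The final bookkeeping is left unfinished.} You flag this as the main obstacle, and the identity $x^*y\,y^*x=x^*(yy^*)x$ does not lead anywhere. It closes in one line, as in the paper. From $|x||y|=0$ you get $s(x)s(y)=0$, hence $xy^*=x\,s(x)s(y)\,y^*=0$ directly. Moreover
\[
x^*y=s(x)\,x^*y\,s(y)=s(x)\,(x^*y+y^*x)\,s(y)=0,
\]
since $s(x)\,y^*x\,s(y)=s(x)s(y)\,y^*x\,s(x)s(y)=0$. Your adjoint trick is also valid, but you attached it to the wrong conjunct. Equality for $(x^*,y^*)$ yields $s(x^*)s(y^*)=0$, which gives $x^*y=x^*s(x^*)s(y^*)\,y=0$. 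The other conjunct, $xy^*=0$, already follows from the pair $(x,y)$.
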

Within the semifinite setting,
this extends 
equality conditions for the noncommutative Clarkson's inequality
(see \cite[Theorem 1]{Yeadon} and \cite[Theorem A.1]{RX}).
Having this equality condition at hand, 
we obtain   a   noncommutative version of Lamperti's result \cite[Theorem 4.1]{Lamperti}, showing that 
 every isometry (not necessarily surjective) $T$ on a certain $F$-normed noncommutative
 Orlicz spaces $L_\Phi(\cM,\tau)$ is disjointness-preserving and has the   elementary form.
\begin{theorem}\label{Tform}
    Assume that $\Phi$ on $[0,\infty)$  is continuous and strictly increasing with $\Phi(0)=0$, $\Phi(1)=1$.
    Let $\cM$ and $\cN$ be two semifinite von Neumann algebras equipped with semifinite faithful normal traces $\tau$ and $\nu$, respectively.
    Suppose that $\Phi\left(\sqrt{t}\right)$ is either strictly convex or strictly concave. 
If $T:L_\Phi(\cM,\tau)\to L_\Phi(\cN,\nu)$ is an isometry,
then there exist a partial isometry $u\in\cN$, 
a Jordan $*$-monomorphism $J:\cM\to \cN$
and a positive operator $b$ (possibly not measurable) affiliated with $Z(J(\cM))$
such that
\[
T(x)=ubJ(x),\quad x\in L_\Phi(\cM,\tau)\cap \cM
\]
and $u^*u=s(b)=J(\mathbf{1})$.
Moreover,
\[
\tau(x)=\nu(\Phi(b)J(x)), \quad 0\le x\in L_1(\cM,\tau)\cap \cM.
\]
\end{theorem}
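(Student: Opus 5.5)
The plan follows the Yeadon--Lamperti scheme: first show that $T$ preserves disjointness, then build a Jordan map from the action of $T$ on projections. Throughout, $L_\Phi$ carries the $F$-norm determined by the modular $x\mapsto\tau(\Phi(|x|))$. The normalisation $\Phi(1)=1$ and strict monotonicity of $\Phi$ make $T$ preserve this modular on the relevant elements, i.e.\ $\nu(\Phi(|Tx|))=\tau(\Phi(|x|))$. If the $F$-norm is of Luxemburg type, we would first derive modular preservation by a scaling argument, comparing $\|\lambda x\|$ for all $\lambda>0$.

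\textbf{Step 1: disjointness is preserved.} Take $x,y\in L_\Phi(\cM,\tau)\cap\cM$ with $x^*y=0$ and $xy^*=0$. Theorem~\ref{thm:lamperti} then gives equality in the Lamperti inequality for $x,y$. Modular preservation under $T$ transfers this equality to $Tx,Ty$, since $T(x\pm y)=Tx\pm Ty$. The equality case of Theorem~\ref{thm:lamperti}, under strict convexity or strict concavity of $\psi$, then forces $(Tx)^*Ty=0$ and $Tx(Ty)^*=0$.

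\textbf{Step 2: structure on projections.} Work with $\tau$-finite projections $p\in\cM$. Disjointness preservation makes $p\mapsto T(p)$ orthogonally additive, with mutually orthogonal left and right supports for orthogonal $p$. Write the polar decomposition $T(p)=u_p|T(p)|$. For $p\le q$, the relation $T(q)=T(p)+T(q-p)$ with disjoint summands shows that the $u_p$ are compatible restrictions of one another, and likewise the $|T(p)|$. Taking suprema over an increasing net of $\tau$-finite projections converging to $\mathbf 1$ produces a partial isometry $u$ and a positive self-adjoint operator $b$ with $T(p)=ub\,J(p)$. Here $J(p)$ is defined as the support projection $s(|T(p)|)$, which is in general not measurable. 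The map $J$ is additive on orthogonal projections and is normal, so it extends by linearity and norm density of finite-rank spectral sums to a Jordan $*$-homomorphism $J$ on $\cM$. It is injective because $T$ is isometric, so $T(p)\neq0$ whenever $p\neq0$.

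\textbf{Step 3: centrality of $b$ and the trace formula.} To see that $b$ is affiliated with $Z(J(\cM))$, apply modular preservation to $T(\lambda p+\mu q)$ for disjoint $p,q$ and all scalars $\lambda,\mu$. Strict monotonicity of $\Phi$ then shows that the restriction of $b$ to $J(p)$ commutes with $J(\cM)$. The alternative is a direct check that $|T(p)|$ commutes with every $J(r)$, using disjointness of the pieces $T(p\wedge r)$ and $T(p-p\wedge r)$ when $p$ and $r$ commute, and otherwise working through the algebra generated by $p$ and $r$. The formula $T(x)=ubJ(x)$ on $L_\Phi\cap\cM$ follows by linearity on simple operators and density. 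For positive $x$, the $F$-norm controls convergence in measure, which passes to the limit. For the trace formula, modular preservation gives
\[
\tau(p)=\tau(\Phi(p))=\nu(\Phi(bJ(p)))=\nu(\Phi(b)J(p)),
\]
using $\Phi(1)=1$ and centrality of $b$. Extending by linearity and normality yields $\tau(x)=\nu(\Phi(b)J(x))$ for $0\le x\in L_1\cap\cM$. Finally, $u^*u=s(b)=J(\mathbf 1)$ holds by construction.

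\textbf{Main obstacle.} I expect the hardest part to be Step 3 together with the extension in Step 2, namely upgrading the action on projections to a genuine Jordan $*$-monomorphism with central weight $b$ when $T$ is not surjective. Yeadon's argument for $L_p$ uses homogeneity of the modular, which is unavailable here. I would try to replace it by the strict monotonicity of $\Phi$ applied to the spectral decompositions of $|T(\lambda p+\mu q)|$, comparing the two sides for all scalars $\lambda,\mu$.
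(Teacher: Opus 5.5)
Your Step 1 is the paper's argument. Note, though, that modular preservation comes from Lemma~\ref{modular-Fnorm con}, i.e.\ from the Mazur--Orlicz form of $\|\cdot\|_\Phi$, and not from $\Phi(1)=1$, which only enters the trace formula. You should also rescale $x,y$ into $Y_\Phi(\cM,\tau)$ so that the traces in Theorem~\ref{thm:lamperti} are finite.

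From there the routes diverge. The paper proves that $T$ is order-to-measure continuous and obtains $u,b,J$ from \cite[Theorem 3.5]{FHKX}. You build them by hand, but the mechanisms you give do not work as stated.
\begin{enumerate}
\item In Step 3, modular preservation for $T(\lambda p+\mu q)$ with $p\perp q$ carries no information. $T(p)$ and $T(q)$ are already disjoint, so $\nu(\Phi(|\lambda T(p)+\mu T(q)|))=\nu(\Phi(|\lambda||T(p)|))+\nu(\Phi(|\mu||T(q)|))$ holds automatically. A projection $r$ that does not commute with $p$ is never seen, and your two-projection case is left open.
\item In Step 2, orthogonal additivity plus normality of $p\mapsto J(p)$ does not yield a linear map. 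That would be a Gleason/Dye-type statement, which is false on type $I_2$ summands.
\end{enumerate}
Both points are in fact already contained in your compatibility claim, once you index over the directed set of \emph{all} $\tau$-finite projections. A single net $q_\alpha\uparrow\mathbf 1$ need not dominate a given finite $p$. For $p\le q$, disjointness of $T(p)$ and $T(q-p)$ gives $T(q)J(p)=T(p)$ and $|T(q)|^2=|T(p)|^2+|T(q-p)|^2$, with orthogonal supports $J(p)$ and $J(q-p)$. Hence $J(p)$ reduces $|T(q)|$. Since any two $\tau$-finite projections lie under a common one, $b$ commutes with every $J(p)$. Linearity of $J$ on $q\cM q$ is then pulled back from linearity of $T$, through $T(x)=T(q)J(x)$ and injectivity of $|T(q)|$ on $J(q)$.

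The genuinely missing ingredient is continuity of $T$. $J(p)=s(|T(p)|)$ is only defined for $\tau$-finite $p$, because an infinite-trace projection is not in $L_\Phi$. When $\tau(\mathbf 1)=\infty$, such elements are not norm dense in $\cM$. So the normality of $J$, which you assert without proof, and its extension to $\cM$ both need a limit argument.

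Your passage to all of $L_\Phi\cap\cM$ by $F$-norm density also fails. When $\Phi(\sqrt t)$ is strictly convex, $\Phi$ may violate $\Delta_2$ at $0$. Then (e.g.\ in $L_\infty(0,\infty)$) there is $0\le x\in L_\Phi\cap\cM$ with $\tau(\Phi(2x))=\infty$. Since $x$ is bounded, this gives $\|x-y\|_\Phi\ge\frac12$ for every $y$ with $\tau(s(y))<\infty$.

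The paper supplies exactly this continuity. If $|x-x_i|\le u_i\downarrow0$, then $\tau(\Phi(\lambda u_i))\to0$ for one fixed $\lambda$ with $\lambda u_{i_0}\in Y_\Phi(\cM,\tau)$, by domination by $\mu(\Phi(\lambda u_{i_0}))\in L_1(0,\infty)$. Hence $\nu(\Phi(\lambda|T(x-x_i)|))\to0$, and $T(x_i)\to T(x)$ in measure. Together with disjointness preservation, this is what \cite[Theorem 3.5]{FHKX} needs.

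Finally, ``extending by linearity and normality'' in the trace formula hides real work, since $\Phi(b)$ is unbounded and possibly not $\nu$-measurable. After getting $\tau(p)=\nu(\Phi(b)J(p))$ via Lemma~\ref{Phibp}, the paper must show $\Phi(b)J(xe^x(\frac1n,\infty))\uparrow\Phi(b)J(x)$. It does this through the joint spectral measure of the commuting pair $\Phi(b),J(x)$, and only then uses normality of $\nu$.
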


\section{Preliminaries}

In this section,
we recall some basic facts and notions which are needed for the proofs of the main results of this paper. 

\subsection{$\tau$-measurable operators,  singular value function and measure topology}


Let $\mathcal{M}$ be a   von Neumann algebra on a   Hilbert space $\cH$.
Let ${\bf 1}$ be the identity.
Let $P(\mathcal{M})$ denote the lattice of all projections in $\mathcal{M}$,
$U(\cM)$ denote the set of all unitary elements in $\cM$
 and $Z(\cM)$ denote the center of $\cM$.
The set of all self-adjoint elements in $\cM$ is denoted by $\cM_h$ and the set of all positive elements in $\cM$ is denoted by $\cM_+$.
For each self-adjoint operator $x$ affiliated with $\cM$,
we denote its spectral measure by $\{e^x\}$.
We say that a linear operator  $x$ is {\it measurable} (denoted by $x \in S(\cM)$) if and only if $x$ is closed,
densely defined, affiliated with $\cM$, and $e 
^{|x|}
(\lambda,\infty )$ is a finite projection in $\cM$ for some $\lambda >0$. It follows
immediately that in the case when $\cM$ is a von Neumann algebra of type $III$ or a type $I$ factor,
we have $S(\cM) =\cM$. 
For type $II$ von Neumann algebras, this is no longer true\cite{DPS,LSZ}.

Let $\cM$ be a semifinite von Neumann algebra on a  Hilbert space $\cH$ equipped with a faithful normal semifinite trace $\tau$.
A measurable operator $x $ affiliated with $\cM$ is called {\it $\tau$-measurable} if
$\tau(e ^{|x|}(\lambda,\infty))<\infty$ for sufficiently large $\lambda$.
We denote the set of all $\tau$-measurable operators by
$S(\mathcal{M},\tau)$, which is a unital $^*$-algebra with respect to
strong sums and products (denoted simply by $x+y$ and $xy$ for all $x,y\in S(\cM,\tau)$)\cite{DPS,LSZ}.
The set of all self-adjoint elements in $S(\cM,\tau)$ is denoted by $S(\cM,\tau)_h$ and the set of all 
positive elements in $S(\cM,\tau)_h$ is denoted by $S(\cM,\tau)_+$.

For any closed and densely defined linear operator $x$, the null projection $n(x)=n(|x|)$ is the projection onto its kernel Ker$(x)$.
The support projection $s(x)$ of $x$ is defined by $s(x)=\mathbf{1}-n(x)$, which is the projection onto the closure of Ran$(x^*)$.

The two-sided ideal $\mathcal{F}(\cM,\tau)$ in $\cM$ consisting of all elements with $\tau$- finite support projections is defined by
\[
\mathcal{F}(\cM,\tau)=\{x\in \cM: \tau(s(x))<\infty\}.
\]

For any $x \in S(\mathcal{M}, \tau)$,  the {\it spectral 
distribution function} of $|x|$ is defined by setting
\[
d_{|x|}(\lambda) = \tau\bigl(e^{|x|}(\lambda, \infty)\bigr), \quad \lambda > 0.
\]
Note that $d_{|x|}$ is a right-continuous function (see, e.g., \cite{DPS}).
The {\it singular value function} is defined to be the right-continuous inverse of the spectral distribution function $d_{|x|}$, that is,
\[
\mu(t; x) = \inf\left\{ \lambda \geq 0 : d_{|x|}(\lambda) \leq t\right\}.
\]	
The space $S_0(\cM,\tau)$ of $\tau$-compact operators is
defined by
\[
S_0(\cM,\tau):=\{x\in S(\cM,\tau):\mu(\infty;x)=0\},
\]
which is an absolutely solid $*$-subalgebra of $S(\cM,\tau)$ \cite[Proposition 2.4.4]{DPS}.  
 Recall that
\begin{align*}
L_1(\cM,\tau):=\{x\in S(\cM,\tau):\tau(|x|)<\infty\}
=&\left\{x\in S(\cM,\tau):\int_0^\infty\mu(t;x)dt<\infty\right\}\\
=&\{x\in S(\cM,\tau):\mu(x)\in L_1(0,\infty )\}.
\end{align*}

For convenience of the reader, we also recall the definition of the measure topology $t_m$ on the algebra $S(\cM,\tau)$. For every $\varepsilon,\delta>0$, we define the neighborhood
\[
V(\varepsilon,\delta)=\{x\in S(\cM,\tau):\exists  p\in P(\cM) 
\text{ such that }  \norm{x(\mathbf{1}-p)}_\infty\leq \varepsilon, \tau(p)\leq \delta\}.
\]
The collection $\{V(\varepsilon,\delta):\varepsilon,\delta>0\}$ is a neighborhood base at zero  for a complete metrizable Hausdorff vector space topology $t_m$ on $S(\cM,\tau)$  (see, e.g., \cite{DPS}).
If a net $\{x_i\}_{i\in I}$ in $S(\cM,\tau)$ converges to the operator $x\in S(\cM,\tau)$ in $t_m$,
then this is denoted by $x_i\xrightarrow{t_m}x$ and the net $\{x_i\}_{i\in I}$ is said to converge to $x$ in measure.


The measure topology can also be characterized in terms of the singular value function.
For a net $\{x_i\}_{i\in I} \subset S(\cM,\tau)$, 
$x_i \xrightarrow{t_m}  0$ if and only if  
\begin{align}\label{m-mu}
    \mu(t;x_i )\to  0,\quad   t>0.
\end{align}

\subsection{$F$-normed Orlicz space}\label{Orlicz pre}

Recall that an extended-valued functional
$\rho:X\rightarrow[0,\infty]$ on a real linear space $X$ is called
a {\it modular}, 
if
\begin{enumerate}
\item $\rho(x)=0 \,\Longleftrightarrow\, x=0,$
\item $\rho(-x)=\rho(x),$
\item $\rho(\alpha x+\beta y)\leq \rho(x)+\rho(y)$
\end{enumerate}
for every $x,y\in X$ and every $\alpha,\beta\geq0$ such that
$\alpha+\beta=1$.

An $F$-norm $\norm{\cdot}$ on a complex linear space $X$ is a function $\norm{\cdot}:X\to [0,\infty)$, such that for all $x,y\in X$,
the following properties hold:
\begin{enumerate}
\item $\norm{x}=0\Longleftrightarrow x=0$;
\item $\norm{\alpha x}\le \norm{x}$, $\alpha\in\mathbb{C}$, $|\alpha|\le 1$;
\item $\lim_{\alpha\to 0}\norm{\alpha x}=0$;
\item $\norm{x+y}\le \norm{x}+\norm{y}$.
\end{enumerate}

Assume that $\Phi:[0,\infty)\to[0,\infty)$ is (not necessarily convex) continuous and non-decreasing   with $\Phi(0)=0$ and $\Phi(t)>0$ whenever $t>0$.
Let $m$ denote the Lebesgue measure on $(0,\infty)$, and let
$L_{0}(m)$ be the linear space of all complex-valued measurable functions on
$(0,\infty)$, where functions equal $m$-almost everywhere are
identified. 
Then,
\[
    \rho_{\Phi}(f):=
    \int_{0}^{\infty}\Phi\bigl(|f(t)|\bigr)dm(t),
    \quad f\in L_{0}(m).
\]
is a modular on $L_0(m)$ (see \cite[Section 2.3]{MO}). 
The linear subspace
\[
L_\Phi(0,\infty):=
\left\{f\in L_{0}(m):\rho_{\Phi}(\lambda f)<\infty\text{ for some } \lambda>0\right\}
\]
of $L_{0}(m)$ is called an {\it $F$-normed  Orlicz space},
which is complete with respect to the following $F$-norm,
called the {\it Mazur--Orlicz $F$-norm} (see \cite{MazurOrlicz, MO}):
\[
\cNorm{f}_\Phi:=\inf\left\{\lambda>0:\rho_\Phi\left(\frac{f}{\lambda}\right)\le \lambda\right\},\quad f\in L_\Phi(0,\infty).
\]


Let $\cM$ be a semifinite von Neumann algebra equipped with a semifinite faithful normal trace $\tau$.
By Calkin correspondence \cite{HLS},
\begin{align*}
L_\Phi(\cM,\tau):=&\{x\in S(\cM,\tau):\mu(x)\in L_\Phi(0,\infty)\}\\
=&\{x\in S(\cM,\tau):\tau(\Phi(\lambda|x|))<\infty\text{ for some }\lambda>0\}
\end{align*}
is an $F$-normed noncommutative  Orlicz space equipped with the Mazur--Orlicz $F$-norm 
\begin{align*}
\cNorm{x}_\Phi:=\cNorm{\mu(x)}_\Phi=&
\inf\left\{\lambda>0:\rho_\Phi\left(\frac{\mu(x)}{\lambda}\right)\le \lambda\right\}\\=&
\inf\left\{\lambda>0:\tau\left(\Phi\left(\left|\frac{x}{\lambda}\right|\right)\right)\le \lambda\right\},
\quad x\in L_\Phi(\cM,\tau).
\end{align*}
In this sense, the modular on $L_\Phi(\cM,\tau)$ is $\rho_\Phi(x)=\tau(\Phi(|x|))$, $x\in L_\Phi(\cM,\tau)$.
By the definition, we have $L_\Phi(\cM,\tau)\subset S_0(\cM,\tau)$.
Indeed, 
if $x\in L_\Phi(\cM,\tau)$, the fact $\mu(x)\in L_\Phi(0,\infty)$ implies that
there exists $\lambda>0$ such that
\[
\int_0^\infty \Phi(\lambda\mu(t;x))dm(t)<\infty.
\]
By the properties of $\Phi$, we have $\mu(\infty;x)=0$, i.e., $x\in S_0(\cM,\tau)$.

For each $F$-normed (noncommutative)  Orlicz space,
the following lemma presents the relationship between  the modular $\rho_\Phi$ and $F$-norm $\cNorm{\cdot}_\Phi$.

\begin{lemma}\label{modular-Fnorm con}
Let $\Phi:[0,\infty)\to [0,\infty)$ be continuous and non-decreasing with $\Phi(0)=0$, $\Phi(t)>0$ whenever $t>0$.
If $\lambda>0$ and $x\in L_\Phi(\cM,\tau)$, then
\[
\tau(\Phi(|x|))\le \lambda\, \Longleftrightarrow\,
\cNorm{\lambda x}_\Phi\le \lambda.
\]
\end{lemma}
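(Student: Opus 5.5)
The plan is to verify both implications directly from the definition of the Mazur--Orlicz $F$-norm. First reduce everything to the scalar picture via the identity $\tau(\Phi(|y|))=\int_0^\infty \Phi(\mu(t;y))\,dm(t)$ for $y\in S(\cM,\tau)$, which is the one already used to identify the two expressions for $\cNorm{\cdot}_\Phi$. Together with $\mu(cy)=c\,\mu(y)$ for $c>0$, this gives for every $s>0$
\[
\tau\left(\Phi\left(\left|\tfrac{\lambda x}{s}\right|\right)\right)=\int_0^\infty \Phi\left(\tfrac{\lambda}{s}\mu(t;x)\right)dm(t).
\]
The right-hand side is non-increasing in $s$, because $\Phi$ is non-decreasing.

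For ($\Rightarrow$), assume $\tau(\Phi(|x|))\le\lambda$. Then $s=\lambda$ belongs to the set $\{s>0:\tau(\Phi(|\lambda x/s|))\le s\}$, since $\lambda x/\lambda=x$. Hence its infimum $\cNorm{\lambda x}_\Phi$ is at most $\lambda$.

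For ($\Leftarrow$), assume $\cNorm{\lambda x}_\Phi\le\lambda$ and choose $s_n$ in that set with $s_n\downarrow s_0:=\cNorm{\lambda x}_\Phi\le\lambda$. There are two cases.

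If some $s_n\le\lambda$, then $\lambda/s_n\ge1$. Monotonicity of $\Phi$ then gives
\[
\tau(\Phi(|x|))\le\tau\left(\Phi\left(\tfrac{\lambda}{s_n}|x|\right)\right)\le s_n\le\lambda.
\]

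Otherwise $s_n>\lambda$ for all $n$ and $s_n\downarrow\lambda$. Set $c_n=\lambda/s_n$, so $c_n\uparrow1$. By continuity and monotonicity of $\Phi$, $\Phi(c_n\mu(t;x))\uparrow\Phi(\mu(t;x))$ pointwise. The monotone convergence theorem then yields
\[
\tau\left(\Phi\left(c_n|x|\right)\right)\uparrow\tau(\Phi(|x|)).
\]
Since $\tau(\Phi(c_n|x|))\le s_n\to\lambda$, we conclude $\tau(\Phi(|x|))\le\lambda$.

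The only delicate point is this boundary case, where the infimum equals $\lambda$ but is not attained. It is handled by the continuity of $\Phi$ together with monotone convergence; alternatively one could use normality of $\tau$ applied to the increasing family $\Phi(c_n|x|)$.
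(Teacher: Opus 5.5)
Your proof is correct and is essentially the paper's argument: ($\Rightarrow$) is immediate from the definition, and ($\Leftarrow$) approximates the infimum from above and lets the scaling factor increase to $1$, using continuity and monotonicity of $\Phi$ together with a monotone convergence argument. The differences are cosmetic: the paper avoids your case split by comparing with the fixed sequence $a_n=\lambda/(\lambda+\frac{1}{n})<\lambda/r_n$, and it uses normality of $\tau$ directly rather than monotone convergence applied to $\Phi(c_n\mu(t;x))$.
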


\begin{proof}
The implication $\Rightarrow$ is trivial.

$(\Leftarrow)$.
Suppose that $\|\lambda x\|_{\Phi}\leq\lambda.$
Let $\{r_n\}$ be such that $\cNorm{\lambda x}_\Phi\le r_n<\lambda+\frac1n$. 
Then,
\begin{align}\label{mo-condition}
\tau\left(\Phi\left(\frac{\lambda}{r_n}|x|\right)\right)\leq r_n.
\end{align}
Denote
\[
a_n:=\frac{\lambda}{\lambda+\frac{1}{n}}<\frac{\lambda}{r_n}.
\]
Since $\Phi$ is non-decreasing, it follows that
\[
\Phi(a_n|x|)
\leq
\Phi\left(\frac{\lambda}{r_n}|x|\right).
\]
Consequently,
\begin{align}\label{anx mo-lim}
\tau\bigl(\Phi(a_n|x|)\bigr)\leq
\tau\left(\Phi\left(\frac{\lambda}{r_n}|x|\right)\right)
      \stackrel{\eqref{mo-condition}}{\le}r_n
    <\lambda+\frac1n.
\end{align}
Since $\Phi$ is continuous and non-decreasing, it follows that 
$\Phi(a_n|x|)\uparrow\Phi(|x|)$
whenever $n\to \infty$. By the normality of $\tau$, we have
\[
\tau(\Phi(|x|))=\lim_{n\to\infty}\tau(\Phi(a_n|x|))
\stackrel{\eqref{anx mo-lim}}{\le}
\lim_{n\to\infty}\left(\lambda+\frac1n\right)=\lambda,
\]
which completes the proof.
\end{proof}

\section{Jensen's trace inequality and its equality condition}

\subsection{The proof of Theorem \ref{equality condition} when $a$ is a $\tau$-finite projection}

The main result of this subsection is 
 Proposition \ref{prop:finite} below, 
 in which we consider  the equality condition in Jensen's inequality when the contraction $a$ is a $\tau$-finite projection.
This is the crucial ingredient in  the proof of Theorem \ref{equality condition}.



\begin{proposition}\label{prop:finite}
Assume that $\cM$ is a  semifinite von Neumann algebra equipped with a semifinite faithful normal trace $\tau$.
Let $e\in P(\cM)$ with $\tau(e)<\infty$ and let $x\in S(\cM,\tau)_h$. 
Suppose that $I$ is a non-degenerate interval containing $0$ and $\sigma(x),\sigma(exe)$. 
If $f:I\to\mathbb R$ is continuous and convex such that
$f(0)=0$ and $ef(x)e,f(exe)\in L_1(e\cM e,\tau)$,
then
\[
\tau(ef(x)e)\ge \tau(f(exe)).
\]
In the case where $f$ is strictly convex, equality holds if and only if $ex=xe$.
\end{proposition}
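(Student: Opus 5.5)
We plan to prove Proposition~\ref{prop:finite} through a scalar representation by a probability kernel, which is what the introduction announces as Lemma~\ref{lem:kernel}. Write $y:=exe$, viewed as a self-adjoint $\tau$-measurable operator in the finite von Neumann algebra $e\cM e$ with the finite trace $\tau_e:=\tau|_{e\cM e}$. Because $\tau(e)<\infty$, every operator affiliated with $e\cM e$ is $\tau_e$-measurable, so spectral theory applies without restriction. Let $\nu_y$ be the finite measure $\nu_y(B)=\tau(e^y(B))$ on $\sigma(y)$, which has total mass $\tau(e)$. We then define a kernel: for a Borel set $A\subset\sigma(x)$, let $K(\cdot,A)$ be the Radon--Nikodym derivative with respect to $\nu_y$ of the measure $B\mapsto \tau(e^y(B)\,e\,e^x(A)\,e)$. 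This measure is dominated by $\nu_y$ because $0\le e\,e^x(A)e\le e$, and we have $K(s,\sigma(x))=1$. Since $\sigma(x)$ is Polish, a regular version of $K$ exists, and each $K(s,\cdot)$ is a probability measure on $\sigma(x)$.

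The identities we want from $K$ are the following:
\[
\tau(ef(x)e)=\int\!\!\int f(t)\,K(s,dt)\,d\nu_y(s),\qquad s=\int t\,K(s,dt)\ \ \nu_y\text{-a.e.},\qquad \tau(f(y))=\int f(s)\,d\nu_y(s).
\]
The third identity is the spectral theorem. The first holds for bounded Borel $f$ by construction, and we extend it by monotone convergence, first for $f\ge 0$ and then for $f=f_+-f_-$. Here the hypothesis $ef(x)e\in L_1$ makes $\tau(ef_\pm(x)e)$ finite, because the splitting $f=f_+-f_-$ with $f(0)=0$ is compatible with the spectral calculus.

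The barycenter identity is the delicate point and the main obstacle, since $x$ may be unbounded and $exe$ need not satisfy $e|x|e\in L_1$. We would test it against $g(y)$ for bounded Borel $g$ supported on a bounded part of $\sigma(y)$. The goal is $\tau(g(y)\,e x e)=\tau(g(y) y)$, and the left side should equal $\int g(s)\int t\,K(s,dt)\,d\nu_y$. To justify this we would truncate $x$ by the spectral projections $e^x[-n,n]$ and pass to the limit using the convexity assumption on $f$. The leverage is this: a convex $f$ with $f(0)=0$ that is not affine grows at least linearly on the relevant side, so $ef(x)e\in L_1$ gives $\int |t|\,K(s,dt)<\infty$ almost everywhere on the unbounded directions. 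When $f$ is affine on a half-line, we instead use that $I$ contains $\sigma(x)$; if $I$ is bounded on that side there is nothing to control. We expect this integrability bookkeeping to be the hardest step, and we would handle it case by case according to whether $f$ is linear on a ray.

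With these identities in hand, the scalar Jensen inequality for each probability measure $K(s,\cdot)$ gives $\int f\,dK(s,\cdot)\ge f(s)$. Integrating against $\nu_y$ yields $\tau(ef(x)e)\ge\tau(f(y))$.

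For the equality case with $f$ strictly convex, equality forces $K(s,\cdot)=\delta_s$ for $\nu_y$-almost every $s$. Then $\int\!\!\int (t-s)^2K(s,dt)\,d\nu_y=0$, and this quantity equals $\tau(ex^2e)-\tau(y^2)=\tau(e x(\mathbf 1-e)xe)=\|(\mathbf 1-e)xe\|_2^2$. We would make this rigorous after localizing with spectral projections of $y$, or more directly as follows: the kernel being Dirac means $\tau(e^y(B)\,e\,e^x(A)\,e)=\tau(e^y(B\cap A))$. Consequently $e\,e^x(A)e$ is a projection for each $A$, so $e$ commutes with $e^x(A)$ (if $e p e$ is a projection then $ep=pe$), and hence $ex=xe$. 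Conversely, if $ex=xe$, then $f(exe)=ef(x)e$ because $f(0)=0$, and equality is immediate.
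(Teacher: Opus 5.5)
Your kernel $K$ is fine (it is the kernel of Lemma~\ref{lem:kernel}, transported to $(\sigma(y),\nu_y)$), and fibrewise scalar Jensen is also the paper's strategy. The gap is in the two identities that are supposed to link $\tau(ef(x)e)$ to $K$.

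\emph{The identity for $\tau(ef(x)e)$.} You justify $\tau(ef_\pm(x)e)<\infty$ by compatibility of $f=f_+-f_-$ with the spectral calculus, but this does not follow. After compression by $e$ the positive operators $ef_+(x)e$ and $ef_-(x)e$ are no longer disjoint, so their non-integrable parts can cancel. For instance, take $\cM=L_\infty(0,1)\bar\otimes M_2$ with $\tau=\int\otimes\Tr$, $e=\mathbf{1}\otimes e_{11}$, $x=\begin{pmatrix}0&h\\ h&0\end{pmatrix}$ with $0\le h\notin L_1(0,1)$, and $f(t)=t$. Then $exe=ef(x)e=0$, while $ef_\pm(x)e=\tfrac12 h\otimes e_{11}$ has infinite trace. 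Moreover, here $\cA=\mathbb{C}e$ and $K(0,\cdot)$ is the symmetrised law of $h$, which has no first moment, so the barycenter identity is not even meaningful. This $f$ is affine, so the proposition is trivial for it, but the example shows that neither identity is a formal consequence of the hypotheses.

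\emph{The barycenter identity.} Your plan uses growth of $f$ itself together with $ef(x)e\in L_1$. However, $f$ can be bounded or flat on an unbounded side of $I$, e.g.\ $f(t)=e^{-t}-1$ or $f(t)=t_+$ on $I=\mathbb{R}$. There your fallback (``$I$ bounded on that side'') does not apply. Even where $f_+$ grows linearly, $ef(x)e\in L_1$ gives no pointwise bound on $\int f_+\,dK(s,\cdot)$, because of exactly the cancellation above. What does control first moments is $f-l$ for an affine $l$ of intermediate slope. But then you must evaluate $\int l\,dK(s,\cdot)$, which is the barycenter again, so the argument is circular as organised.

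\emph{The missing idea.} Never integrate an affine function, or the sign-changing $f$, against the kernel. The paper does this as follows.
\begin{enumerate}
\item Write $f=\sup_k l_k$ over countably many supporting lines, and set $g_{k,n}=(f-l_k)\wedge n$.
\item Use that $e\,l_k(x)\,e=l_k(exe)$ holds exactly as $\tau$-measurable operators, with no integrability needed.
\item From $ef(x)e-eg_{k,n}(x)e-l_k(exe)=e(g_k-g_{k,n})(x)e\ge 0$ and Lemma~\ref{lem:localized-positivity}, which tolerates the possibly non-integrable term $l_k(exe)\in S(\cA)$, obtain $E_\cA(ef(x)e)-l_k(exe)\ge E_\cA(eg_{k,n}(x)e)$.
\item Apply the kernel only to the bounded non-negative functions $g_{k,n}$. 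Monotone convergence in $n$ and the supremum over $k$ then give $\Psi(E_\cA(ef(x)e))\ge f(\Psi(exe))$ almost everywhere, and integrating yields the inequality.
\end{enumerate}
Neither the barycenter identity nor $ef_\pm(x)e\in L_1$ is ever used. The same inequalities $\int(f-l_k)\,d\nu_\omega\le f(s)-l_k(s)$ force $\nu_\omega=\delta_s$ in the equality case. From that point your projection argument ($e\,e^x(A)\,e$ is a projection, hence $e^x(A)$ commutes with $e$) agrees with the paper's.
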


Scalarization through spectral measures underlies the proofs of Jensen's trace inequality in \cite{BrownKosaki, HaradaKosaki2010, Kosaki2013}. Precisely, for a contraction \(a\) and a unit vector \(\xi\in \cH\), one considers the probability measure
\footnote{$\delta_0$ denotes the Dirac probability measure concentrated at $0$.}
\[
\nu_\xi(B)
 =\|e^x(B)a\xi\|^2
  +\bigl(1-\|a\xi\|^2\bigr)\delta_0(B),
  \quad B\subset \mathbb{R},
\]
and, for suitable \(h\) with \(h(0)=0\),
\[
\langle a^*h(x)a\xi,\xi\rangle
   =\int h(t)\,d\nu_\xi(t).
\]
The desired vector inequality then follows from the classical scalar Jensen inequality, with approximation arguments used for unbounded operators under various  additional assumptions. Harada and Kosaki \cite[Theorem 15]{HaradaKosaki} applied the discrete version of this approaches to the equality problem for matrices by choosing eigenvectors of \(exe\).



Inspired by this, to prove Proposition \ref{prop:finite},
we aim to reduce  Jensen's trace inequality to a classical Jensen's inequality, thereby applying the equality condition.
To this end,
in Lemma \ref{lem:kernel}, we establish a new scalar integral
representation for the bounded Borel functional calculus of $exe$,
in the sense of trace preservation. 
For the convenience of the
reader, some   basic definitions are recalled.

Assume that $\cM$ is a von Neumann algebra equipped with a semifinite faithful normal trace $\tau$ and $\cN$ is a von Neumann subalgebra of $\cM$ such that the restriction $\tau|_\cN$ of $\tau$ to $\cN$ is semifinite.
There exists a unique linear {\it conditional expectation}
\[
E:(L_1+L_\infty)(\cM,\tau)\to (L_1+L_\infty)(\cN,\tau|_\cN),
\]
which is positive, faithful and trace-preserving.
Moreover,
\begin{align}\label{bimodular}
E(xy)=E(x)y,\quad E(ab)=aE(b)
\end{align}
for all $x\in L_1(\cM,\tau),\,y\in \cN,\, a\in L_1(\cN,\tau|_\cN),\, b\in \cM$.
(see, e.g., \cite{Takesaki,DPS,Umegaki1956}). 

For each $x\in S(\cM,\tau)_h$, 
denote by $W^*(x)$
the abelian von Neumann algebra generated by all spectral projections of $x$.
Assume, in addition, that $\tau|_{W^*(x)}$
is semifinite.
Then there exist a 
(strictly) localizable measure space
$(\Omega,\Sigma,\mu)$ (or simply, $(\Omega,\mu)$)
and normal a $*$-isomorphism
\[
\Psi:\left(W^*(x),\tau|_{W^*(x)}\right)\to L_\infty(\Omega,\mu)
\]
preserving trace (in the sense of integration), i.e.,
\[
\tau(y)=\int_\Omega \Psi(y)(\omega)\,d\mu(\omega),
\quad y\in W^*(x)_+
\]
(see e.g., \cite{KadisonRingroseI} and \cite[Theorem 7.21]{BGL2022}).

Let $(\Omega,\Sigma)$ and $(S,\mathcal S)$ be measurable spaces,
and let $\mathcal P(S)$ denote the space of probability measures
on $(S,\mathcal S)$.
A {\it probability kernel}  from
$\Omega$ to $S$ is a measurable mapping
\[
\omega\longmapsto\nu_\omega
\]
from $\Omega$ into $\mathcal P(S)$ (see
\cite[Chapter~3]{Kallenberg2021}). 
In the following lemma, we apply this notion with
$S=\sigma(x)$ equipped with its Borel $\sigma$-algebra
$\mathcal B(\sigma(x))$.

\begin{lemma}\label{lem:kernel}
Assume that $\cM$ is a  semifinite von Neumann algebra equipped with a semifinite faithful normal trace $\tau$.
Let $e\in P(\cM)$ with
$\tau(e)<\infty$, and let $x\in S(\cM,\tau)_h$.  
Denote
\[
        \cA:=W^*(exe)\subset e\cM e.
\]
Let $\Psi:\left(\cA,\tau|_\cA\right)\to L_\infty(\Omega,\mu)$ be a normal trace-preserving $*$-isomorphism, where $(\Omega,\mu)$ is a finite localizable measure space.
If $E_{\cA}:L_1(e\cM e,\tau)\to L_1(\cA,\tau|_\cA)$ is  the   conditional expectation,
then there exists a family $\{\nu_\omega\}_{\omega\in \Omega}$ of probability measures on $\sigma(x)$, forming a probability kernel,
such that
\[
\Psi(E_\cA(eh(x)e))(\omega)=\int_{\sigma(x)} h(t)d\nu_\omega(t),
\quad\text{for a.e. }\omega
\]
for every bounded Borel function $h$ on $\sigma(x)$.
\end{lemma}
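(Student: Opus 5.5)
The plan is to build the kernel by disintegrating a positive, unital, normal map from $L_\infty(\sigma(x))$ into $L_\infty(\Omega,\mu)$. For a bounded Borel function $h$ on $\sigma(x)$, the operator $h(x)$ lies in $\cM$. Hence $eh(x)e\in e\cM e$, and because $\tau(e)<\infty$ it belongs to $L_1(e\cM e,\tau)$. So we may define
\[
\Theta(h):=\Psi\bigl(E_\cA(eh(x)e)\bigr)\in L_\infty(\Omega,\mu).
\]
The map $\Theta$ has the following properties:
\begin{itemize}
\item It is linear and positive.
\item It is unital: $\Theta(1)=\Psi(E_\cA(e))=\Psi(e)=1$, since $e$ is the unit of $\cA\subset e\cM e$.
\item It is contractive in the sense that $0\le h\le 1$ implies $0\le\Theta(h)\le 1$.
\item It is normal: if $h_n\uparrow h$ boundedly, then $h_n(x)\uparrow h(x)$ strongly, hence $eh_n(x)e\uparrow eh(x)e$. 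Normality of $E_\cA$ (it is trace-preserving, hence $\sigma$-weakly continuous on bounded sets) and normality of $\Psi$ then give $\Theta(h_n)\uparrow\Theta(h)$ a.e.
\end{itemize}

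Next I would realize $\Theta$ pointwise. Set $m(B):=\Theta(\chi_B)$ for Borel $B\subset\sigma(x)$. By the previous step, $m$ is a "random probability measure": it is finitely additive, countably additive a.e. for each fixed sequence, and satisfies $0\le m(B)\le1$ with $m(\sigma(x))=1$. Since $\sigma(x)\subset\mathbb R$ is a standard Borel space, the usual regular-conditional-distribution argument applies, as in \cite[Chapter~3]{Kallenberg2021}.

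First choose representatives $F_\omega(q)=m((-\infty,q]\cap\sigma(x))(\omega)$ for rational $q$. Off a single null set $N$ (a countable union of null sets), these are non-decreasing in $q$, right-continuous along rationals, and tend to $0$ and $1$ at $\mp\infty$. Extend them to distribution functions $F_\omega$ on $\mathbb R$, and let $\nu_\omega$ be the associated Borel probability measures. For $\omega\in N$, set $\nu_\omega$ equal to a fixed point mass.

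It remains to check that $\nu_\omega$ is concentrated on $\sigma(x)$. Since $\mathbb R\setminus\sigma(x)$ is a countable union of open intervals with rational endpoints, each of $\Theta$-mass zero, we get $\nu_\omega(\mathbb R\setminus\sigma(x))=0$ a.e. On this null set we may also redefine $\nu_\omega$ as the fixed point mass.

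Then I would pass from intervals to all bounded Borel functions by a monotone class argument. Measurability of $\omega\mapsto\nu_\omega(B)$, together with the identity $\nu_\cdot(B)=\Theta(\chi_B)$ a.e., holds for half-lines $B=(-\infty,q]$. The class of $B$ satisfying both properties is closed under proper differences and increasing unions, the latter by the normality of $\Theta$ and monotone convergence for $\nu_\omega$. The $\pi$-$\lambda$ theorem therefore gives it for all Borel $B$. Linearity extends the identity to simple functions. Finally, uniform approximation of bounded Borel $h$, using $\|\Theta(h)\|_\infty\le\|h\|_\infty$, gives the claimed formula. This also shows that $\omega\mapsto\nu_\omega$ is a probability kernel.

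The main obstacle is the normality of $\Theta$ in the presence of a possibly unbounded $x$. One has to justify that $E_\cA$ restricted to $e\cM e$ is normal, and that the a.e. exceptional sets can be chosen uniformly via countably many conditions. Here I would use that $(\Omega,\mu)$ is finite: $\tau(e)<\infty$ and $\Psi$ is trace-preserving. Consequently $\tau\bigl(E_\cA(eh_n(x)e)\bigr)=\tau\bigl(eh_n(x)e\bigr)\to\tau\bigl(eh(x)e\bigr)$ by normality of $\tau$. Combined with monotonicity, this gives a.e. convergence $\Theta(h_n)\uparrow\Theta(h)$ by monotone convergence in $L_1(\Omega,\mu)$, without needing any separate continuity property of $E_\cA$.
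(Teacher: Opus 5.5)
Your proposal is correct and follows essentially the same route as the paper. Both arguments take rational-indexed values of $\Psi(E_\cA(e\,e^x(\cdot)\,e))$ on half-lines, discard a single null set, build Lebesgue--Stieltjes measures, run a monotone class argument, pass to simple functions and then uniform approximation, and fix an arbitrary measure on the exceptional set.

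The differences are cosmetic. You use closed half-lines with right-continuous distribution functions, where the paper uses $(-\infty,r)$ with left-continuous ones. You derive the a.e.\ monotone continuity of $\Theta$ yourself from $\tau(e)<\infty$ and trace preservation, rather than citing normality of $\Psi\circ E_\cA$ from \cite{DPS}. Finally, your separate rational-interval argument for concentration on $\sigma(x)$ is not needed: applying the monotone class identity to $B=\mathbb R\setminus\sigma(x)$ gives it directly, which is what the paper does.
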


\begin{proof}
Since $\Psi$ is normal, it follows from \cite[p. 457]{DPS} that
$\Psi\left(E_\cA(\cdot)\right)$ is normal on $e\cM e$.
For each $r\in\mathbb Q$, let
\[
f_r:=\Psi\left(E_{\mathcal A}
 \left(ee^x(-\infty,r)e\right)\right)\in L_\infty(\Omega,\mu).
\]
After discarding one null set $N\subset\Omega$,
we have
\[
0\leq f_r\leq\mathbf{1},\quad
f_r\leq f_s\ \ (r<s),\quad
f_r=\sup_{\substack{\mathbb{Q}\ni s<r}}f_s.
\]
Moreover, $f_r\to0$ pointwise as $r\to-\infty$ and $f_r\to\mathbf{1}$  pointwise as $r\to\infty$.

For every $\omega\in \Omega\backslash N$,
define
\[
F_\omega(t):=\sup_{\substack{\mathbb{Q}\ni r<t}}f_r(\omega),
\]
which is non-decreasing and left-continuous 
with $\lim\limits_{t\to -\infty}F_\omega(t)=0$ and 
$\lim\limits_{t\to +\infty}F_\omega(t)=1$. 
By the Lebesgue--Stieltjes theorem
\cite[Theorem 1.8.1]{Bogachev2007}, there exists a probability measure
$\nu_\omega$ on $\mathbb R$ such that
\[
\nu_\omega((-\infty,t))
 =F_\omega(t),
 \quad t\in\mathbb R.
\]
Note that $\omega\mapsto\nu_\omega(-\infty,r)$ on $\Omega\backslash N$ is measurable
when $r\in \mathbb{Q}$. 
Then, 
by the monotone-class theorem \cite[Theorem 1.9.3(ii)]{Bogachev2007}, 
we have $\omega\mapsto\nu_\omega(B)$ on $\Omega\backslash N$ is measurable for every Borel set $B$.
Since
\[
\Psi\left(E_{\mathcal A}
 \left(e\,e^x(-\infty,r)(x)e\right)\right)(\omega)
 =\nu_\omega((-\infty,r)),
\quad \omega\in \Omega\backslash N
\]
when $r\in\mathbb Q$,
it follows from the monotone-class theorem
\cite[Theorem 1.9.3(ii)]{Bogachev2007} that
\begin{align}\label{Phi=nu Borel}
\Psi\left(E_{\mathcal A}\left(ee^x(B)e\right)\right)(\omega)
 =\nu_\omega(B),
\quad\omega\in \Omega\backslash N
\end{align}
for every Borel set $B\subset \mathbb{R}$.
In particular,
\[
0=\Psi\left(E_{\mathcal A}\left(ee^x(\mathbb{R}\backslash \sigma(x))e\right)\right)(\omega)
 =\nu_\omega(\mathbb{R}\backslash \sigma(x)),
\quad\omega\in \Omega\backslash N,
\]
which implies that $\nu_\omega$ is a probability measure on $\sigma(x)$ when $\omega\in \Omega\backslash N$.
Let $h$ be a bounded simple function on $\sigma(x)$.
It follows from \eqref{Phi=nu Borel} that
\[
\Psi\left(E_{\mathcal A}(eh(x)e)\right)(\omega)
=\int_{\sigma(x)}h(t)d\nu_\omega(t),
\quad\omega\in \Omega\backslash N.
\]
Let $h$ be a bounded Borel function on $\sigma(x)$.
Noting that
\[
\left\|
\Psi\left(E_{\mathcal A}\left(eh(x)e\right)\right)
\right\|_\infty
\leq \|h\|_\infty,
\]
by uniform
approximation by simple functions, we have
\[
\Psi\left(E_{\mathcal A}(eh(x)e)\right)(\omega)
=\int_{\sigma(x)}h(t)d\nu_\omega(t),
\quad\omega\in \Omega\backslash N.
\]

Let $\rho$ be an arbitrary fixed probability measure on $\sigma(x)$ and let $\nu_\omega=\rho$ for all $\omega\in N$.
Recall that $\omega\mapsto\nu_\omega(B)$ on $\Omega\backslash N$ is measurable for every Borel set $B$.
It is readily verified that 
\[
\omega\longmapsto\nu_\omega
\]
is measurable mapping from $\Omega$ to $\cP(\sigma(x))$.
Thus, $\{\nu_\omega\}_{\omega\in\Omega}$
is the required probability kernel.
\end{proof}

The following lemma is trivial when $y\in L_1(\cA,\tau|_\cA)_h$. Below, we relax the condition to $y\in S(\cA,\tau|_\cA)_h$. 
\begin{lemma}\label{lem:localized-positivity}
Assume that $\cM$ is a von Neumann algebra equipped with a finite faithful normal trace $\tau$. 
Let
\(\mathcal A\) be a von Neumann subalgebra of $\cM$,
and let $E_\cA:L_1(\cM,\tau)\to L_1(\cA,\tau|_\cA)$ be the conditional expectation. 
If $x\in L_1(\cM,\tau)_h$, $y\in S(\cA,\tau|_\cA)_h$
satisfy \(x+y\geq0\), 
then
\[
    E_{\mathcal A}(x)+y\geq0.
\]
\end{lemma}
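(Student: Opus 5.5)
Since $y$ lies in the abelian-or-not subalgebra $\cA$ and is only $\tau$-measurable, the idea is to cut $y$ down by its spectral projections. These lie in $\cA$ and commute with $y$, so on each cut $y$ becomes bounded, and there the trivial case applies. Then let the cuts exhaust $\mathbf 1$.

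\textbf{Step 1: the cut-down operators.} Let $p_n:=e^{y}[-n,n]\in P(\cA)$. Since $\tau$ is finite, $y p_n=p_n y p_n\in \cA_h\subset L_1(\cA,\tau|_\cA)$, and $p_n\uparrow \mathbf 1$ strongly (using $n(y)$-type arguments: $y$ is self-adjoint, so $e^y(\mathbb R)=\mathbf 1$). Compressing the hypothesis gives
\[
p_n x p_n + p_n y p_n \ge 0
\]
in $S(\cM,\tau)$. Compression preserves positivity of $\tau$-measurable operators, since $\langle p(x+y)p\xi,\xi\rangle\ge0$ on a core.

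\textbf{Step 2: apply the trivial case on the cut.} Now $p_nyp_n\in L_1(\cA)$ and $p_nxp_n\in L_1(\cM)$. Applying the positive map $E_\cA$ and the bimodularity \eqref{bimodular} with $p_n\in\cA$ gives
\[
p_nE_\cA(x)p_n+p_nyp_n=E_\cA\bigl(p_n(x+y)p_n\bigr)\ge0 .
\]
Since $p_n$ commutes with $y$, this says $p_n\bigl(E_\cA(x)+y\bigr)p_n\ge 0$ for every $n$.

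\textbf{Step 3: pass to the limit.} Put $z:=E_\cA(x)+y\in S(\cA,\tau|_\cA)_h$. We must deduce $z\ge0$ from $p_nzp_n\ge0$ with $p_n\uparrow\mathbf 1$. Because $\tau$ is finite, $\tau(\mathbf 1-p_n)\to0$, so $p_nzp_n\to z$ in measure: the difference $z-p_nzp_n$ is supported, left or right, on $\mathbf 1-p_n$, whose trace tends to $0$, and so by the definition of $V(\varepsilon,\delta)$ it tends to $0$ in $t_m$. The positive cone $S(\cM,\tau)_+$ is closed in the measure topology (see \cite{DPS}), hence $z\ge0$.

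An alternative for Step 3, if closedness needs justification, is more direct. For any $\xi$ in the range of $p_m$ with $m$ fixed and large, $\xi$ lies in the domain of $y$. Take $\xi$ also in a suitable domain for $E_\cA(x)$, for example by additionally cutting with a spectral projection $q_k$ of $|E_\cA(x)|$, which is harmless because we only need a core. Then $\langle z\xi,\xi\rangle=\langle p_nzp_n\xi,\xi\rangle\ge0$ for $n\ge m$. The union of these ranges forms a core for $z$, which gives positivity.

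\textbf{Main obstacle.} I expect the main obstacle to be Step 3, and more precisely the domain issues in the alternative approach. So I would rely on the $t_m$-closedness of the positive cone together with the measure convergence $p_nzp_n\to z$, which is immediate for finite $\tau$.
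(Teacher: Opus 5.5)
Your proposal is correct and is essentially the paper's proof. The paper also cuts with spectral projections $q_n=e^{|y|}[0,n)\in\cA$ of $y$, uses bimodularity of $E_\cA$ to get $q_n(E_\cA(x)+y)q_n\ge0$, and concludes from $q_n(E_\cA(x)+y)q_n\to E_\cA(x)+y$ in measure together with $t_m$-closedness of the positive cone (citing \cite{DPS}), where you instead give the direct support-projection argument via $\tau(\mathbf 1-p_n)\to0$.

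Your ``alternative'' Step 3 is unnecessary and should be dropped. The claim that vectors in $p_m\cH\cap q_k\cH$ form a core for $z$ is unjustified: $\cA$ need not be abelian, so these projections need not commute.
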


\begin{proof}

For every \(n\geq1\), let $q_n:=e^{|y|}[0,n)\in\mathcal A$.
Since \(\tau\) is finite,
it follows that $q_nyq_n\in \cA\subset  L_1(\mathcal A,\tau|_{\mathcal A})$,
which implies that
\[
    q_n(x+y)q_n\in L_1(\mathcal M,\tau)_+.
\] 
By \eqref{bimodular} and $q_n\in \cA$, we obtain 
\[
0\leq E_{\mathcal A}(q_n(x+y)q_n)
=q_nE_{\mathcal A}(x)q_n+q_nyq_n
=q_n(E_{\mathcal A}(x)+y)q_n.
\]
Noting that \(q_n\uparrow\mathbf{1}\), 
by \cite[Proposition 2.6.11(i)]{DPS}
we have
\[
    q_n(E_{\mathcal A}(x)+y)q_n
    \xrightarrow{t_m} E_{\mathcal A}(x)+y,
\]
which together with \cite[Proposition 2.6.1(i)]{DPS} yields that
$E_{\mathcal A}(x)+y\geq0.$
\end{proof}

Having the above two lemmas at hand, we are able to prove Proposition \ref{prop:finite}.
\begin{proof}[Proof of Proposition \ref{prop:finite}]

Let $\cA:=W^*(exe)\subset e\cM e$
and let 
$$E_\cA:L_1(e\cM e,\tau|_{e\cM e})\to L_1(\cA,\tau|_\cA)$$ be the conditional expectation.
Denote\footnote{${\rm int}I$ denotes the set of interior points of $I$.}
\[
D:=\mathbb{Q}\cap {\rm int}I
=\{t_k\}_{k\ge 1}.
\]
Recall that $f$ is convex and continuous.
Let $\{l_k\}_{k\ge 1}$ be a sequence of affine minorants such that
\[
l_k\le f,\quad l_k(t_k)=f(t_k),\quad k\ge 1,
\]
which implies that 
\begin{align}\label{f sup lk}
f(t)=\sup_{k\ge 1}l_k(t),\quad t\in I.
\end{align}
For every $k,n\ge 1$, set 
\[
g_k(t):=f(t)-l_k(t)\ge0,\quad g_{k,n}(t):=g_k(t)\wedge n.
\]
For each $k\ge 1$,
since $l_k$ is an affine minorant, it follows from the functional calculus that
$el_k(x)e=l_k(exe)$, 
which together with \cite[Proposition 2.2.20]{DPS} yields that
\[
(ef(x)e-eg_{k,n}(x)e)-l_k(exe)=e(f-g_{k,n}-l_k)(x)e
=e(g_k-g_{k,n})(x)e\ge 0,\quad n\ge 1.
\]
Noting that
$ef(x)e-eg_{k,n}(x)e\in L_1(e\cM e,\tau|_{e\cM e})$,
by Lemma \ref{lem:localized-positivity}, we obtain 
\begin{align}\label{f-lk ge gkn}
E_{\mathcal A}(ef(x)e)-l_k(exe)
\geq E_{\mathcal A}(eg_{k,n}(x)e)
\geq 0.
\end{align}

Let $\Psi:(\cA,\tau|_\cA)\to L_\infty(\Omega,\mu)$ be a trace-preserving $*$-isomorphism, where $(\Omega,\mu)$ is a finite localizable measure space.
Denote the linear space of all complex-valued measurable functions on $(\Omega,\mu)$ by $L_0(\Omega,\mu)$.
By \cite[Theorem 3.13]{weigt},   $\Psi$ is continuous with respect to measure topology.
It follows from \cite[Proposition 2.9.2]{DPS} that
$\Psi$ can uniquely extend to a $*$-isomorphism
\[
\Psi:S(\cA,\tau|_\cA)\to L_0(\Omega,\mu).
\]
In particular, $\Psi$ is trace-preserving \cite[Proposition 3.3.10]{DPS}. 
By \cite[Proposition~2.9.2(iii)]{DPS} and Lemma \ref{lem:kernel}, there exists a family $\{\nu_\omega\}_{\omega\in \Omega}$ of probability measures on $\sigma(x)$, forming a probability kernel, such that
for all $n\ge 1$ and $k\ge 1$,
\begin{align*}
\Psi(E_\cA(ef(x)e))(\omega)-l_k(\Psi(exe)(\omega))
=&
\Psi(E_\cA(ef(x)e))(\omega)-\Psi(l_k(exe))(\omega)\\
\stackrel{\eqref{f-lk ge gkn}}{\ge}&
\Psi(E_\cA(eg_{k,n}(x)e))(\omega)\\
=&\int_{\sigma(x)} g_{k,n}(t)d\nu_\omega(t)\ge 0
\quad\text{for a.e. }\omega.
\end{align*}
Applying the monotone convergence theorem 
\cite[Theorem 1.26]{Rudin},
it follows that
\begin{equation}\label{eq:unbounded mi}
\begin{aligned}
\Psi(E_\cA(ef(x)e))(\omega)-l_k(\Psi(exe)(\omega))
\ge& \int_{\sigma(x)} g_k(t)d\nu_\omega(t)\\
=&\int_{\sigma(x)} f(t)-l_k(t)d\nu_\omega(t)\ge 0
\end{aligned}
\end{equation}
for almost every $\omega$.
By \eqref{f sup lk} and \cite[Proposition 2.9.2(iii)]{DPS}, we have 
\begin{align}\label{EAefxe ge fexe}
\Psi(E_\cA(ef(x)e))(\omega)\ge f(\Psi(exe)(\omega))
=\Psi(f(exe))(\omega)
\quad\text{for a.e. }\omega.
\end{align}
Recall that $\Psi$ and $E_\cA$ are trace-preserving.
Then,
\[
\tau(ef(x)e)=
\int_\Omega\Psi(E_\cA(ef(x)e))(\omega)d\mu(\omega)
\ge \int_\Omega\Psi(f(exe))(\omega)d\mu(\omega)
=\tau(f(exe)).
\]

Next, we discuss the cases where equality holds.
Suppose that $f$ is strictly convex and equality in the above formula holds. 
This together with \eqref{EAefxe ge fexe} implies that 
$$E_\cA(ef(x)e)=f(exe). $$
By \cite[Proposition 2.9.2(iii)]{DPS}, we have 
\[
\Psi(E_\cA(ef(x)e))(\omega)=\Psi(f(exe))(\omega)=f(\Psi(exe)(\omega))\quad\text{for a.e. }\omega.
\]
For each $k\ge 1$, by \eqref{eq:unbounded mi}, we have
\begin{align}\label{eq:int le}
\int_{\sigma(x)} f(t)-l_k(t)d\nu_\omega(t)\le
f(\Psi(exe)(\omega))-l_k(\Psi(exe)(\omega))
\quad\text{for a.e. }\omega.
\end{align}
Since $\{l_k\}_{k\ge 1}$ is countable, it follows that after discarding a null set $N\subset\Omega$, \eqref{eq:int le} holds for all $k\ge 1$.
Let $\omega\in \Omega\backslash N$ be arbitrary.
By \eqref{f sup lk}, there exists a sequence $\{k_j\}$ such that 
\begin{align}\label{limit point}
l_{k_j}(\Psi(exe)(\omega))\to f(\Psi(exe)(\omega)),
\end{align}
which together with \eqref{eq:int le} yields that
\[
\int_{\sigma(x)} f(t)-l_{k_j}(t)d\nu_\omega(t)\to 0.
\]
Since $f$ is strictly convex, $f\ge l_{k_j}$ and $l_{k,j}$ are affine minorants, it follows from 
Fatou's Lemma that
\[
0\le\int_{\sigma(x)}
\liminf_j\left(f(t)-l_{k_j}(t)\right)d\nu_\omega(t)
\le 0,
\]
which together with
\eqref{limit point} yields that 
\footnote{$\delta_{\Phi(exe)(\omega)}$ denotes the Dirac probability measure concentrated at $\Psi(exe)(\omega)$.}
\begin{align}\label{measure trans}
 \nu_\omega=\delta_{\Psi(exe)(\omega)}
\quad\text{for a.e. }\omega.
\end{align}
Let Borel set $B\subset\sigma(x)$ be arbitrary. 
Noting that
\begin{align*}
\Psi(E_\cA(e\chi_B(x)e))(\omega)
\stackrel{\text{Lemma \ref{lem:kernel}}}{=}
\int_{\sigma(x)}\chi_B(t)d\nu_\omega(t)
\stackrel{\eqref{measure trans}}{=}&
\int_{\sigma(x)}\chi_B(t)d\delta_{\Psi(exe)(\omega)}(t)\\
=&\chi_B(\Psi(exe)(\omega))
\quad\text{for a.e. }\omega,
\end{align*}
it follows that
\[
\Psi(E_\cA(e\chi_B(x)e))=\chi_B(\Psi(exe)) 
\stackrel{\text{\cite[Prop. 2.9.2(iv)]{DPS}}}{=}
\Psi(\chi_B(exe)),
\]
i.e., $E_\cA(e\chi_B(x)e)=\chi_B(exe)$.
We claim that $e\chi_B(x)e=\chi_B(exe)\in P(\cA)$.
Indeed,
denote
\[
a:=e\chi_B(x)e\le e,\quad p:=\chi_B(exe)\in P(\cA).
\]
Then, $E_\cA(a)=p$ and
\[
E_\cA((e-p)a(e-p))\stackrel{\eqref{bimodular}}{=}
(e-p)E_\cA(a)(e-p)
=(e-p)p(e-p)=0,
\]
which implies that $(e-p)a(e-p)=0$,
i.e., $a^\frac{1}{2}(e-p)=0$.
This implies $ap=a$.
Arguing similarly, we have $p(e-a)p=0$ and hence,
$p=ap$.
Consequently,
\[
 e\chi_B(x)e=a=ap=p=\chi_B(exe)\in P(\cA), 
\]
which proves our claim.
Noting that
\[
0=e\chi_B(x)e-(e\chi_B(x)e)^2=e\chi_B(x)(\mathbf{1}-e)\chi_B(x)e
=|(\mathbf{1}-e)\chi_B(x)e|^2,
\]
we have $(\mathbf{1}-e)\chi_B(x)e=0$ and
$e\chi_B(x)(\mathbf{1}-e)=0$. 
Hence, 
\[
e^x(B)=\chi_B(x)=e\chi_B(x)e+(\mathbf{1}-e)\chi_B(x)(\mathbf{1}-e)
\]
commutes with $e$.
Since $B$ is arbitrary,
it follows from \cite[Proposition 2.2.22]{DPS} that
$x$ commutes with $e$.

Conversely, if $x$ commutes with $e$, then $e\cH$ and $(\mathbf{1}-e)\cH$ are reducing space of $x$, i.e., 
$x=exe+(\mathbf{1}-e)x(\mathbf{1}-e)$.
It follows that
\begin{align}\label{reducing space}
f(exe)=ef(x)e.
\end{align}
Hence,
\[
 \tau(f(exe))=\tau(ef(x)e).
\]
This completes the proof.
\end{proof}

\subsection{Equality conditions for Jensen's inequality}

Throughout this subsection, unless stated otherwise, we assume that $\cM$ is a semifinite von Neumann algebra equipped with a semifinite faithful normal trace $\tau$.

The following lemma is an easy consequence of 
 \cite[Proposition 3.4.30]{DPS}.
\begin{lemma}\label{lem:blocks}
Let $e,q\in P(\cM)$ such that $q\le e$. 
If $x\in L_1(e\cM e,\tau)$, then
\[
        \tau(x)=\tau(qxq)+\tau((e-q)x(e-q)).
\]
\end{lemma}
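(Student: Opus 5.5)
The plan is to prove Lemma~\ref{lem:blocks} by splitting $x$ into its four corners relative to the decomposition $e=q+(e-q)$, and then showing that the off-diagonal corners have zero trace. Set $q':=e-q$, which is a projection in $\cM$ with $qq'=q'q=0$ and $q+q'=e$. Since $x\in L_1(e\cM e,\tau)$, we have $x=exe$, so in $S(\cM,\tau)$
\[
x=(q+q')x(q+q')=qxq+qxq'+q'xq+q'xq'.
\]
Each of the four terms lies in $L_1(\cM,\tau)$, because $L_1$ is an $\cM$-bimodule. Linearity of $\tau$ on $L_1(\cM,\tau)$ therefore gives
\[
\tau(x)=\tau(qxq)+\tau(qxq')+\tau(q'xq)+\tau(q'xq').
\]

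It remains to show $\tau(qxq')=0$; the term $q'xq$ is handled the same way. Here we use the cyclicity (tracial) property $\tau(by)=\tau(yb)$ for $y\in L_1(\cM,\tau)$ and $b\in\cM$, which is the content of \cite[Proposition 3.4.30]{DPS}. Taking $b=q$ and $y=xq'$ gives
\[
\tau(qxq')=\tau(q\cdot xq')=\tau(xq'q)=\tau(0)=0.
\]
Similarly, $\tau(q'xq)=\tau(xqq')=0$. This yields the stated identity.

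The only point needing care is that we are applying cyclicity to an unbounded $\tau$-measurable $x$ rather than an element of $\cM$. This is handled by the cited proposition, which is stated for $y\in L_1$ and $b\in\cM$. If that statement were only available for positive elements, we would first reduce to that case by writing $x$ as a linear combination of four positive elements of $L_1(e\cM e,\tau)$, using the decomposition $x=\Re x+i\Im x$ and then positive and negative parts.
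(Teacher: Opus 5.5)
Your argument is correct and matches what the paper intends. The paper gives no details beyond calling the lemma an easy consequence of \cite[Proposition 3.4.30]{DPS}, and your four-corner decomposition $x=(q+q')x(q+q')$, with the off-diagonal terms killed by the tracial identity $\tau(by)=\tau(yb)$ for $y\in L_1(\cM,\tau)$ and $b\in\cM$, is the natural unpacking of that citation (your closing hedge about reducing to positive elements is unnecessary, since this identity holds for all $y\in L_1(\cM,\tau)$).
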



To extend the result of Proposition \ref{prop:finite} to general projections, we need the following two lemmas, which provide useful approximation tools.

\begin{lemma}\label{lem:L1cut}
Let $e\in P(\cM)$ and
$\{q_\alpha\}\subset P(\cM)$ such that
$q_\alpha\uparrow e$.  
If $x\in L_1(\cM,\tau)$, then
\[
\tau(exe-q_\alpha xq_\alpha)\to 0.
\]
\end{lemma}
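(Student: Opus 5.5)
We need to show $\tau(exe-q_\alpha xq_\alpha)\to0$ for $x\in L_1(\cM,\tau)$ and $q_\alpha\uparrow e$. The approach is to split the difference into three terms using $q_\alpha\le e$:
\[
exe-q_\alpha xq_\alpha=(e-q_\alpha)xe+q_\alpha x(e-q_\alpha).
\]
By linearity of $\tau$ on $L_1(\cM,\tau)$ and the estimate $|\tau(y)|\le\norm{y}_1$, it suffices to prove that
\[
\norm{(e-q_\alpha)x}_1\to0 \quad\text{and}\quad \norm{x(e-q_\alpha)}_1\to0.
\]
Indeed, $\norm{(e-q_\alpha)xe}_1\le\norm{(e-q_\alpha)x}_1$ and $\norm{q_\alpha x(e-q_\alpha)}_1\le\norm{x(e-q_\alpha)}_1$, since multiplication by contractions does not increase the $L_1$-norm.

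Next, set $p_\alpha:=e-q_\alpha$, so that $p_\alpha\downarrow0$. We reduce to positive operators via the polar decomposition $x=v|x|$. Then
\[
\norm{x p_\alpha}_1=\norm{v|x|p_\alpha}_1\le\norm{|x|p_\alpha}_1,
\]
and by Hölder's inequality,
\[
\norm{|x|p_\alpha}_1=\norm{|x|^{1/2}\,|x|^{1/2}p_\alpha}_1\le\norm{|x|^{1/2}}_2\,\norm{|x|^{1/2}p_\alpha}_2=\tau(|x|)^{1/2}\,\tau(p_\alpha|x|p_\alpha)^{1/2}.
\]
It therefore suffices to show that $\tau(p_\alpha|x|p_\alpha)\to0$. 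Writing $y=|x|^{1/2}\in L_2$, we have $\tau(p_\alpha|x|p_\alpha)=\tau(y p_\alpha y)$. The net $y(\mathbf 1-p_\alpha)y$ increases to $y^2$ in the order sense, because $\mathbf 1-p_\alpha\uparrow\mathbf 1$ strongly and conjugation preserves suprema of bounded increasing nets. By normality of $\tau$, $\tau(y(\mathbf 1-p_\alpha)y)\uparrow\tau(y^2)=\tau(|x|)<\infty$, and hence $\tau(yp_\alpha y)\to0$.

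The remaining term $(e-q_\alpha)x$ is handled by applying the same argument to $x^*\in L_1$, using $\norm{p_\alpha x}_1=\norm{x^*p_\alpha}_1$. The only delicate step is the order-normality claim for the unbounded $y$. If needed, I would make it rigorous by passing to $\tau(y p y)=\tau(p y^2 p)$ and using normality of the functional $z\mapsto\tau(|x|^{1/2} z|x|^{1/2})$ on $\cM$; this is a normal positive functional because $|x|\in L_1$ (compare \cite[Proposition 3.4.30]{DPS}).
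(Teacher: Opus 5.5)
Your argument is correct. The paper gives no proof of this lemma; it is stated as a standard approximation tool right after Lemma~\ref{lem:blocks}. So there is nothing to match step by step, but the route suggested by that placement is much shorter than yours. Since $q_\alpha\le e$, Lemma~\ref{lem:blocks} (or cyclicity $\tau(zy)=\tau(yz)$ for $z\in\cM$, $y\in L_1(\cM,\tau)$) gives
\[
\tau(exe-q_\alpha xq_\alpha)=\tau\bigl((e-q_\alpha)x(e-q_\alpha)\bigr)=\tau\bigl(x(e-q_\alpha)\bigr).
\]
The functional $z\mapsto\tau(xz)$ is normal on $\cM$, by the identification $L_1(\cM,\tau)\cong\cM_*$. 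Since $e-q_\alpha\downarrow0$, we have $e-q_\alpha\to0$ $\sigma$-weakly, and the trace tends to $0$.

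Your decomposition, polar decomposition and H\"older estimate instead prove the stronger statement $\norm{exe-q_\alpha xq_\alpha}_1\to0$. That is more than Proposition~\ref{prop:semifinite} needs, since only convergence of traces is used there. You can also drop the step you flagged as ``delicate'' (order convergence for the unbounded $y=|x|^{1/2}$). By cyclicity, $\tau(p_\alpha|x|p_\alpha)=\tau(|x|p_\alpha)$, and normality of $z\mapsto\tau(|x|z)$ on $\cM$ finishes it. This is the same normality fact the short route uses from the start.
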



The following lemma is obvious for the special case when $x\in S_0(\cM,\tau)$. 
\begin{lemma}\label{lem:exhaust}

Let \(e\in P(\mathcal M)\), \(x\in S(e\mathcal Me,\tau)_h\), and let \(I\subset\mathbb R\) be an interval containing \(\sigma(x)\). Suppose that \(f:I\to\mathbb R\) is continuous and convex, and that \(f\) is either strictly convex or takes a negative value somewhere on $I$.
If 
$f(x)\in L_1(e\cM e,\tau)$,
then there exists a directed net $\{q_\alpha\}\subset P(e\cM e)$
such that
\[
q_\alpha\uparrow e,\quad
\tau(q_\alpha)<\infty,\quad
q_\alpha x=xq_\alpha.
\]
\end{lemma}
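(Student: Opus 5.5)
The goal is a net of $\tau$-finite projections $q_\alpha$ commuting with $x$ and increasing to $e$. We build them from spectral projections of $x$, so commutation is automatic. The key is to show that, away from a single exceptional point $c$, every spectral projection of $x$ has finite trace. We then cut out a spectral window around $c$ and handle the remaining spectral projection $e^x(\{c\})$ separately.

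\emph{Step 1: locating the exceptional point.} Since $f$ is convex and continuous, either $f$ is strictly convex, or $f$ takes a negative value. We claim that in both cases there is at most one point $c\in I$ such that $f$ has no strictly positive lower bound on any region avoiding a neighbourhood of $c$. Concretely, we aim to find $c\in I$ such that for every $\varepsilon>0$ there is $\delta>0$ with
\[
|f|\ge \delta\quad\text{on } I\setminus (c-\varepsilon,c+\varepsilon),
\]
except that when $f$ vanishes identically on a subinterval we must argue differently.

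In the strictly convex case, $f$ has at most two zeros and is strictly monotone off its minimum. This rules out $f$ vanishing on an interval, but $f$ may still have two zeros $z_1<z_2$. For a point $z$ with $f(z)=0$ we have
\[
\tau\bigl(e^x(z-\varepsilon,z+\varepsilon)\bigr)\ \text{possibly infinite},
\]
so zeros are the only places where the trace can blow up. Indeed, if $|f|\ge\delta$ on a Borel set $B\subset\sigma(x)$, then
\[
\delta\,\tau(e^x(B))\le\tau(|f(x)|)<\infty .
\]
Hence $\tau\bigl(e^x(I\setminus Z_\varepsilon)\bigr)<\infty$, where $Z_\varepsilon$ is the $\varepsilon$-neighbourhood of the zero set $Z$ of $f$, which is finite (at most two points).

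In the case where $f$ takes a negative value, the zero set $Z$ has at most two points, again because $f$ is convex and negative somewhere. The same bound gives $\tau(e^x(I\setminus Z_\varepsilon))<\infty$ for each $\varepsilon$.

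\emph{Step 2: the net.} Write $Z=\{z_1,\dots,z_m\}$ with $m\le2$. For each zero $z_j$ the projection $p_j:=e^x(\{z_j\})$ commutes with $x$, and $x p_j=z_jp_j$. Since $e\mathcal Me$ is semifinite, we may choose an increasing net of $\tau$-finite subprojections $r^{(j)}_\beta\uparrow p_j$. These commute with $x$ because $x$ acts as a scalar on $p_j\mathcal H$.

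Now set
\[
q_{\varepsilon,\beta}:=e^x\bigl(\mathbb R\setminus Z_\varepsilon\bigr)+\sum_j r^{(j)}_\beta .
\]
The summands are mutually orthogonal, so $q_{\varepsilon,\beta}$ is a projection. It has finite trace, commutes with $x$, and is directed as $\varepsilon\downarrow0$ and $\beta$ increases. Its supremum is
\[
e^x(\mathbb R\setminus Z)+\sum_j p_j=e,
\]
using $s(x)\le e$ and that $x$ is regarded in $e\mathcal Me$, so that $e^x(\mathbb R)=e$.

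\emph{Main obstacle.} The delicate point is the convex-analysis claim that $Z$ is finite; this is precisely where the hypothesis is needed. If $f$ is convex, not strictly convex, and never negative, it may vanish on a whole interval $J$, and then $e^x(J)$ can be $\tau$-infinite with no commuting finite exhaustion; this is why that case is excluded. We must check carefully that a convex function with a negative value has at most two zeros, and that a strictly convex function cannot vanish on an interval. A second point to verify is that the lower bound $|f|\ge\delta$ really holds on $I\setminus Z_\varepsilon$ uniformly: far from the zeros, $|f|$ is bounded below because convex functions grow at least linearly away from their zero set. Finally, one should check that the net is genuinely directed in the product order.
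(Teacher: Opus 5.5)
Your net has the same shape as the paper's: $\tau$-finite spectral projections of $x$ away from the at most two zeros of $f$, plus $\tau$-finite subprojections of $e^x(\{z_j\})$. The problem is the step meant to make $e^x(\mathbb{R}\setminus Z_\varepsilon)$ $\tau$-finite. The uniform bound $|f|\ge\delta$ on $I\setminus Z_\varepsilon$ is false, and so is the heuristic that convex functions grow at least linearly away from their zero set. Two examples show this:
\begin{itemize}
\item $f(t)=e^t$ on $I=\mathbb{R}$ is strictly convex with $Z=\emptyset$, yet $\inf_I|f|=0$. The infimum is also $0$ on $\sigma(x)$ whenever $\sigma(x)$ is unbounded below.
\item $f(t)=t^2-t$ on $I=(-1,1)$ takes negative values, satisfies $f(0)=0$ and has $Z=\{0\}$, but $|f(t)|\to0$ as $t\to1^-$.
\end{itemize}
So convexity alone does not give the finiteness you need, and as written this step fails.

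The conclusion $\tau\bigl(e^x(\mathbb{R}\setminus Z_\varepsilon)\bigr)<\infty$ is still true, but it needs an ingredient you never use: the $\tau$-measurability of $x$. Choose $\lambda$ with $\tau\bigl(e^{|x|}(\lambda,\infty)\bigr)<\infty$ and set $K:=\bigl(\sigma(x)\cap[-\lambda,\lambda]\bigr)\setminus Z_\varepsilon$. This set is compact, lies in $I$, and contains no zero of the continuous function $f$, so $|f|\ge\delta_K>0$ on $K$. Hence $\tau(e^x(K))\le\delta_K^{-1}\tau(|f(x)|)$, while the remaining part of $e^x(\mathbb{R}\setminus Z_\varepsilon)$ lies under $e^{|x|}(\lambda,\infty)$.

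The paper avoids this detour by cutting along level sets of $|f|$ rather than neighbourhoods of $Z$. It takes $p_n=e^{|f(x)|}(1/n,\infty)$. Then $\tau(p_n)\le n\,\tau(|f(x)|)$ by Chebyshev, and $p_n\uparrow s(f(x))=e-e^x(Z)$, so no lower bound on $|f|$ is needed at all. The rest of your construction, the finite subprojections of the eigenprojections at the zeros and the directedness, then goes through verbatim.
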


\begin{proof}
For each $1\le n\in \mathbb{N}$, denote $p_n:=e^{|f(x)|}\left(\frac{1}{n},\infty\right)$.
Since $f(x)\in L_1(e\cM e,\tau)$,
it follows that 
\[
\tau(p_n)<\infty, \quad p_n\uparrow s(f(x)).
\]
Let $z:=e-s(f(x))$. 
Then,
$z=e^x\{\lambda\in\mathbb{R}:f(\lambda)=0\}$.
By properties of $f$, we have that the zero set of $f$ contains at most two points. 
Consequently, 
there exist two spectral projections $z_1,z_2$ (may be zero) of $x$ satisfying
\begin{align}\label{z=z1+z2}
z=z_1+z_2,\quad xz=\lambda_1z_1+\lambda_2z_2,\quad\lambda_1,\lambda_2\in \mathbb{R}.
\end{align}
There exist two increasing nets 
$\{r_{1,\beta_1}\},\{r_{2,\beta_2}\}$ of $\tau$-finite projections satisfying
\[
r_{1,\beta_1}\le z_1,\quad r_{1,\beta_1}\uparrow z_1,\quad
r_{2,\beta_2}\le z_2,\quad r_{2,\beta_2}\uparrow z_2.
\]
Noting that all $r_{1,\beta_1}$ and $r_{2,\beta_2}$ commutes with $x$ (see \eqref{z=z1+z2}), we have 
\[
q_{n,\beta_1,\beta_2}:=p_n+r_{1,\beta_1}+r_{2,\beta_2}
\]
form the required directed net.

\end{proof}

The following proposition extends the result in Proposition~\ref{prop:finite} to the case of general projections.


\begin{proposition}\label{prop:semifinite}

Let $e\in P(\cM)$ and $x\in S(\cM,\tau)_h$. 
Suppose that $I$ is a non-degenerate interval containing $0$ and $\sigma(x),\sigma(exe)$. 
If $f:I\to\mathbb R$ is continuous and convex such that
$f(0)=0$ and $ef(x)e,f(exe)\in L_1(e\cM e,\tau)$,
then
\[
\tau(f(exe))\le \tau(ef(x)e).
\]
In the case where $f$ is strictly convex, equality holds if and only if $ex=xe$.
\end{proposition}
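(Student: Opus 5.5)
The plan is to reduce to Proposition~\ref{prop:finite} by exhausting $e$ with $\tau$-finite projections that commute with $exe$, and then pass to the limit with Lemma~\ref{lem:L1cut}. We may assume $f$ is not identically zero on $I$ (the case $f\equiv 0$ is trivial), so $f$ is either strictly convex, or non-strictly convex and takes a negative value, or convex, $\ge 0$ and vanishing on a whole subinterval.

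\textbf{Step 1: exhaustion.} Apply Lemma~\ref{lem:exhaust} to $y:=exe\in S(e\cM e,\tau)_h$, using $f(y)\in L_1(e\cM e,\tau)$. This gives a net $q_\alpha\uparrow e$ of $\tau$-finite projections commuting with $exe$. In the remaining case ($f\ge0$, $f$ zero on a subinterval) the lemma does not apply directly. There I would either perturb to $f_\varepsilon(t)=f(t)+\varepsilon\,\phi(t)$ with $\phi$ strictly convex, $\phi(0)=0$ and $\phi$ integrable against both sides, or split $f=f_++f_-$ into its nondecreasing and nonincreasing convex parts (both vanishing at $0$) and work with $x_\pm$. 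Handling this case is a nuisance but not essential. The net is built from spectral projections of $exe$, so $q_\alpha f(exe)q_\alpha=f(q_\alpha xq_\alpha)$ on $q_\alpha\cH$.

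\textbf{Step 2: finite inequality.} For each $\alpha$, apply Proposition~\ref{prop:finite} with $e$ replaced by $q_\alpha$. Since $q_\alpha\le e$, we have $q_\alpha x q_\alpha=q_\alpha(exe)q_\alpha$ and $\sigma(q_\alpha xq_\alpha)\subset\sigma(exe)\cup\{0\}\subset I$. Integrability holds because $q_\alpha f(x)q_\alpha=q_\alpha(ef(x)e)q_\alpha\in L_1$, and $f(q_\alpha xq_\alpha)=q_\alpha f(exe)q_\alpha\in L_1$ by commutation. The proposition gives
\[
\tau(q_\alpha f(exe)q_\alpha)=\tau(f(q_\alpha xq_\alpha))\le\tau(q_\alpha f(x)q_\alpha).
\]
By Lemma~\ref{lem:L1cut}, applied to the $L_1$ elements $f(exe)$ and $ef(x)e$, both sides converge to $\tau(f(exe))$ and $\tau(ef(x)e)$ respectively. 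This yields the inequality.

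\textbf{Step 3: equality.} Assume $f$ is strictly convex and equality holds. Set
\[
\Delta_\alpha:=\tau(q_\alpha f(x)q_\alpha)-\tau(f(q_\alpha xq_\alpha))\ge0 .
\]
The key point is monotonicity: for $\beta\ge\alpha$ we want $\Delta_\beta\ge\Delta_\alpha$, so that equality in the limit forces $\Delta_\alpha=0$ for every $\alpha$. To see this, split with Lemma~\ref{lem:blocks} along $q_\alpha\le q_\beta$. Since $q_\alpha$ commutes with $q_\beta xq_\beta$, the function $f(q_\beta xq_\beta)$ splits into its $q_\alpha$ and $q_\beta-q_\alpha$ blocks. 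Hence
\[
\Delta_\beta=\Delta_\alpha+\Bigl[\tau\bigl((q_\beta-q_\alpha)f(x)(q_\beta-q_\alpha)\bigr)-\tau\bigl(f((q_\beta-q_\alpha)x(q_\beta-q_\alpha))\bigr)\Bigr],
\]
and the bracket is $\ge0$ by Proposition~\ref{prop:finite}. The same splitting with $e$ in place of $q_\beta$ gives $\Delta\ge\Delta_\alpha$ directly, where $\Delta$ is the full gap. So $\Delta=0$ implies $\Delta_\alpha=0$. The equality part of Proposition~\ref{prop:finite} then gives $q_\alpha x=xq_\alpha$ for all $\alpha$. Letting $q_\alpha\uparrow e$ and taking limits in measure (as in Lemma~\ref{lem:localized-positivity}) gives $ex=xe$.

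\textbf{Converse.} If $ex=xe$, then $f(exe)=ef(x)e$ as in \eqref{reducing space}, so equality holds.

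The main obstacle is Step 1 in the degenerate non-strictly convex case, together with checking that $q_\alpha$ commuting with $exe$ suffices for all the block identities used.
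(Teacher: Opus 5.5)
\paragraph{Agreement with the paper.} Your Steps 2 and 3 and the converse follow the paper's argument. You exhaust $e$ by the $\tau$-finite projections of Lemma~\ref{lem:exhaust} commuting with $exe$, apply Proposition~\ref{prop:finite} to $q_\alpha$ and to $q_\beta-q_\alpha$ via Lemma~\ref{lem:blocks}, pass to the limit with Lemma~\ref{lem:L1cut}, and deduce from $\Delta=0$ that every $\Delta_\alpha$ vanishes. The equality part is therefore complete, since a strictly convex $f$ has at most two zeros. One correction to the shortcut: if you split along $q_\alpha\le e$, the complementary bracket involves the $\tau$-infinite projection $e-q_\alpha$. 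Its nonnegativity then comes from your Step~2, not from Proposition~\ref{prop:finite}.

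\paragraph{The gap.} The inequality is not proved for convex $f\ge0$ vanishing on a nondegenerate interval $Z\ni 0$. This case is part of the statement, not a side nuisance, and the exhaustion genuinely breaks down there. In $B(\cH)$ with $\tau=\Tr$, finite-rank projections commuting with $exe$ lie under its pure-point spectral subspace. They therefore cannot increase to $e$ once $exe$ has continuous spectrum, and since $f(exe)\in L_1$ that continuous spectrum must sit in $Z$. A concrete instance: $x=M_t\oplus 3$ on $L_2[0,1]\oplus\mathbb C$, $f(t)=(t-1)_+$, and $e$ the complement of a rank-one projection onto a vector with nonzero components in both summands. All hypotheses hold, while $ex\neq xe$.

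Neither of your proposed fixes works.
\begin{enumerate}
\item Perturbation: a strictly convex $\phi$ with $\phi(0)=0$ has at most two zeros. So $|\phi|\ge c>0$ on some compact $K\subset Z$ of positive length carrying continuous spectrum of $exe$, and $\Tr(|\phi(exe)|)\ge c\,\Tr(e^{exe}(K))=\infty$. No admissible perturbation exists.
\item Splitting $f=f_++f_-$: both parts are still flat on a half-line, so the same obstruction remains. Pushing it through via $x_\pm$ would need a monotonicity step such as $\tau(g(exe))\le\tau(g(ex_+e))$ for nondecreasing $g$, plus the Jensen trace inequality for positive operators. That is an external result.
\end{enumerate}
The paper closes exactly this hole by citation. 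For $f\ge 0$, the submajorization $f(exe)\prec\!\prec ef(x)e$ of \cite[Theorem 3.5]{HaradaKosaki2010} gives the inequality directly. The exhaustion is used only when $f$ takes a negative value, so that Lemma~\ref{lem:exhaust} applies, and in the strictly convex equality part.

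Your case split also misses functions such as $f(t)=|t|$ ($f\ge0$, zero set $\{0\}$, not strictly convex). These are not covered by Lemma~\ref{lem:exhaust} as stated, although its proof still works, since it only uses that the zero set of $f$ is finite.
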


\begin{proof}



In the case where $f\ge 0$, the inequality follows from \cite[Theorem 3.5]{HaradaKosaki2010}.
It suffices to consider the case where $f$ is not non-negative.

Let $\{q_\alpha\}$ be the increasing net from Lemma \ref{lem:exhaust} such that $q_\alpha$ commutes with $exe$.
For every $q\in \{q_\alpha\}$, define
\[
\Delta(q):=\tau(qf(x)q)-\tau(qf(exe)q).
\]
Since $q$ commutes with $exe$, similarly to \eqref{reducing space}, it follows that
\begin{align}\label{fqxq=qfexeq}
f(qxq)=f(qexeq)=qf(exe)q,
\end{align}
which together with Proposition \ref{prop:finite} yields that
\begin{align}\label{eq:Deltaq >=0}
\Delta(q)=\tau(qf(x)q)-\tau(f(qxq))\ge0.
\end{align}
If $p,q\in \{q_\alpha\}$ with $p\le q$, then
Lemma \ref{lem:blocks} and Proposition \ref{prop:finite} imply that
\begin{align*}
\Delta(q)-\Delta(p)&=\tau((q-p)f(x)(q-p))-\tau((q-p)f(exe)(q-p))\\
&\stackrel{\eqref{fqxq=qfexeq}}{=}
\tau((q-p)f(x)(q-p))-\tau(f((q-p)x(q-p)))\ge 0.
\end{align*}
Thus, $\Delta(q_\alpha)\uparrow$.
Since $q_\alpha\uparrow e$, it follows from Lemma \ref{lem:L1cut} that
\[
\tau(ef(x)e-q_\alpha f(x)q_\alpha)\to 0
\,\text{ and }\,
\tau(f(exe)-q_\alpha f(exe)q_\alpha)\to 0,
\]
which imply that $\Delta(q_\alpha)\to \tau(ef(x)e)-\tau(f(exe))$.
By \eqref{eq:Deltaq >=0}, we have
\[
\tau(ef(x)e)\ge \tau(f(exe)).
\]

Next, we discuss the cases where equality holds.
Suppose that $f$ is strictly convex.
If
\[
\tau(ef(x)e)= \tau(f(exe)),
\]
then  constructing $\{q_\alpha\}$ again by Lemma \ref{lem:exhaust},  we obtain 
$$0\le\Delta(q_\alpha)=\tau(q_\alpha f(x)q_\alpha)-\tau(f(q_\alpha xq_\alpha))\uparrow \tau(ef(x)e)-\tau(f(exe))=0,$$
which implies that
\[
\tau(q_\alpha f(x)q_\alpha)=\tau(f(q_\alpha xq_\alpha))
\]
for all $\alpha$.
Applying Proposition~\ref{prop:finite} to $q_\alpha$,
it follows that $q_\alpha$ commutes with $x$.
Noting that $q_\alpha\xrightarrow{so}e$, we have $e$ commutes with $x$.

Conversely, if $ex=xe$, then by an argument analogous to the proof of Proposition~\ref{prop:finite}, we obtain
\[
\tau(ef(x)e)= \tau(f(exe)).
\]
\end{proof}

For $x$ and $a$ as in Theorem \ref{equality condition},
by the dilation technique used in the proof of 
\cite[Corollary 3]{HaradaKosaki},
we may apply Proposition \ref{prop:semifinite} to the dilated self-adjoint operator and the corresponding projection in
$S\left(\cM\bar\otimes M_2,\tau\otimes\Tr\right)$, 
and thereby prove Theorem~\ref{equality condition}.
For completenss, we include a full proof below.


\begin{proof}[Proof of Theorem \ref{equality condition}]

Let
\[
        \tM=\cM\bar\otimes M_2,
        \quad
        \ttau=\tau\otimes\Tr.
\]
Define
\[
        \tx:=
        \begin{pmatrix}
        x&0\\
        0&0
        \end{pmatrix}
        \in S(\tilde{\cM},\tilde{\tau}),
        \quad
        v=
        \begin{pmatrix}
        a\\
        (\mathbf{1}-a^*a)^{1/2}
        \end{pmatrix}.
\]
Noting that
\[
f(\tx)=\begin{pmatrix}
        f(x)&0\\
        0&0
        \end{pmatrix}
\]
($f(0)=0$), we have 
\begin{align}\label{x tx trans}
v^*\tx v=a^*xa,\quad v^*f(\tx)v= a^*f(x)a.
\end{align}
Let $e:=vv^*\in \tM$.
Then we have $e^*=e$ and 
\[
e^2=(vv^*)(vv^*)=v(v^*v)v^*=v\mathbf{1}v^*=vv^*=e,
\]
i.e., $e\in P(\tM)$.
Define $\Psi:\cM\to e\tM e$ by
\[
\Psi(y):=vyv^*,\quad y\in \cM,
\]
which is a unital $*$-isomorphism ($v^*v=\mathbf{1}$).
For each $y\in \cM$, since
\begin{equation}\label{Psi tracepreserving}
\begin{aligned}
\ttau(\Psi(y))=
\ttau(vyv^*)
&=\tau(aya^*)+\tau((\mathbf{1}-a^*a)^{1/2}y(\mathbf{1}-a^*a)^{1/2})\\
&=\tau(ya^*a)+\tau(y(\mathbf{1}-a^*a))=\tau(y),
\end{aligned}
\end{equation}
it follows from \cite[Proposition 2.9.3]{DPS} that 
$\Psi$ can uniquely extended to a trace-preserving $*$-isomorphism
\[
\Psi:S(\cM,\tau)\to S(e\tM e,\ttau).
\]
Noting that 
\begin{align}\label{tx x trans}
    e\tx e\stackrel{\eqref{x tx trans}}{=}v(a^*xa)v^*,
\end{align}
by \cite[Proposition 2.9.2]{DPS}, we have
\begin{align}\label{ftx fx trans}
    f(e\tx e)
\stackrel{\eqref{tx x trans}}{=}
f(\Psi(a^*xa))=\Psi(f(a^*xa))
=vf(a^*xa)v^*.
\end{align}
It follows from \eqref{Psi tracepreserving} that
\begin{align}\label{ttau tau}
\ttau(ef(\tx)e)
\stackrel{\eqref{x tx trans}}{=}
\tau(va^*f(x)av^*)=
\tau(a^*f(x)a),
\quad
\ttau(f(e\tx e))
\stackrel{\eqref{ftx fx trans}}{=}
\tau(f(a^*xa)),
\end{align}
which together with Proposition~\ref{prop:semifinite} yields that
\[
\tau(a^*f(x)a)\ge \tau(f(a^*xa)).
\]

Next, we consider the case where $f$ is strictly convex.
Assume first that $\tau(f(a^*xa))=\tau(a^*f(x)a)$.
Then, 
\[
\ttau(ef(\tx)e)\stackrel{\eqref{ttau tau}}{=}\ttau(f(e\tx e)).
\]
Similarly to \eqref{tx x trans}, we have $v(a^*x^2a)v^*=e\tx^2e$,
which together with Proposition \ref{prop:semifinite} yields that
\[
\Psi(a^*x^2a)=e\tx^2 e=(e\tx e)^2
\stackrel{\eqref{tx x trans}}{=}\Psi\left((a^*xa)^2\right),
\]
i.e., $a^*x^2a=(a^*xa)^2$.

Conversely, assume that $a^*x^2a=(a^*xa)^2$.
Then,
\[
e\tx^2 e=\Psi\left(a^*x^2a\right)=\Psi\left((a^*xa)^2\right)\stackrel{\eqref{tx x trans}}{=}(e\tx e)^2.
\]
Consequently,
\begin{align*}
\left|\left(\mathbf{1}_{\tM}-e\right)\tilde{x} e\right|^2=e\tilde{x}\left(\mathbf{1}_{\tM}-e\right)\tilde{x} e=0,
\end{align*}
i.e., $\left(\mathbf{1}_{\tM}-e\right)\tilde{x} e=0=e\tx\left(\mathbf{1}_{\tM}-e\right)$.
Note that $\tx$ commutes with $e$.
Arguing  mutatis mutandis as in the  proof of Proposition \ref{prop:finite}, we have 
\[
\tau(f(a^*xa))\stackrel{\eqref{ttau tau}}{=}
\ttau(f(e\tx e))=\ttau(ef(\tx)e)
\stackrel{\eqref{ttau tau}}{=}\tau(a^*f(x)a).
\]

\end{proof}

Let $y,z\in S(\cM,\tau)$. 
If
\[
\int_0^t \mu(s;y)ds\le \int_0^t \mu(s;z)ds,\quad t>0,
\]
then $y$ is said to be {\it submajorized} by $z$, denoted by
$y\prec\!\prec z$\cite{DPS,LSZ}.

\begin{remark}\label{positive case}
Let $a$, $x$ and $f$ be as in Theorem~\ref{equality condition}.
\begin{enumerate}
\item 
In the special case where 
$f$ is non-negative, 
the submajorization inequality
of \cite[Theorem 3.5]{HaradaKosaki2010} yields
\begin{align}\label{sub f}
f(a^*xa)\prec\!\prec a^*f(x)a,
\end{align}
i.e.,
\[
\int_0^t\mu(s;f(a^*xa))\,ds
\leq
\int_0^t\mu(s;a^*f(x)a)\,ds,
\quad t>0.
\]
If, in addition, $f$ is strictly convex, then 
\begin{align}\label{trace singular value equality}
\tau(f(a^*xa))
 =\tau(a^*f(x)a)
\Longleftrightarrow 
\mu(f(a^*xa))
=
\mu(a^*f(x)a).
\end{align}
Indeed, if equality holds in Theorem~\ref{equality condition}, then \eqref{ttau tau} and
Proposition~\ref{prop:semifinite} imply that $e$ commutes with $\widetilde{x}$ (in the proof of Theorem \ref{equality condition}), which together with Borel functional calculus yields that $f(e\widetilde{x}e)=ef(\widetilde{x})e$.
Hence,
by \eqref{x tx trans} and \eqref{ftx fx trans}, we have
\[
\Psi(f(a^*xa))
 =f(e\widetilde{x}e)
 =ef(\widetilde{x})e
 =\Psi(a^*f(x)a).
\]
Since $\Psi$ is injective, it follows that
\[
f(a^*xa)=a^*f(x)a,
\]
and therefore their singular value functions coincide. Conversely,
the equality of the singular value functions implies
\[
\tau(f(a^*xa))
 =\int_0^\infty\mu(t;f(a^*xa))\,dt
 =\int_0^\infty\mu(t;a^*f(x)a)\,dt
 =\tau(a^*f(x)a).
\]

\item 
The non-negativity assumptions on $f $ in the first part of this remark  are essential.
Indeed, \cite[Theorem 3.5]{HaradaKosaki2010}  yields only 
\[
f(a^*xa)_+\prec\!\prec \bigl(a^*f(x)a\bigr)_+.
\]
As pointed out in
\cite[p.~141]
{HaradaKosaki2010}, \eqref{sub f}
cannot be expected   when  $f$ is  not non-negative.
On the other hand, the non-negativity assumptions are also essential
for the equivalence \eqref{trace singular value equality}.

\end{enumerate}
\end{remark}

\subsection{Related inequalities and their equality conditions.}

Following the dilation technique used in the proof of \cite[Corollary 8]{HaradaKosaki},
we apply
Theorem \ref{equality condition} to characterize the equality cases in the following two trace inequalities.

\begin{corollary}\label{prop:two contractions}
Let $\{x_1,\dots,x_n\}\subset S(\cM,\tau)_h$ and let $\{a_1,\dots,a_n\}\subset\cM$ be such that 
$\sum_{k=1}^na_k^*a_k\le \mathbf{1}$.
Suppose that $I$ is a non-degenerate interval containing $0$ and 
\(\bigcup_{k=1}^n\sigma(x_k)\),
\(\sigma\left(\sum_{k=1}^n a_k^*x_ka_k\right)\).
Assume that $f$ is continuous on $I$ with $f(0)=0$
and 
\[
f\left(\sum_{k=1}^n a_k^*x_ka_k\right),
\quad
a_k^*f(x_k)a_k\,~(1\le k\le n)
\]
belong to $L_1(\cM,\tau)$.
If $f$ is convex, then
\[
\tau\left(f\left(\sum_{k=1}^n a_k^*x_ka_k\right)\right)\le 
\sum_{k=1}^n \tau(a_k^*f(x_k)a_k).
\]
In the case when $f$ is strictly convex, equality holds if and only if 
\[
\sum_{k=1}^na_k^*x_k^2a_k=\left(\sum_{k=1}^na_k^*x_ka_k\right)^2.
\]

If $f$ is concave, then the reverse inequality holds. 
In the case when $f$ is strictly concave,
equality again holds if and only if
\[
\sum_{k=1}^na_k^*x_k^2a_k=\left(\sum_{k=1}^na_k^*x_ka_k\right)^2.
\]
\end{corollary}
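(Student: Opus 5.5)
We reduce Corollary~\ref{prop:two contractions} to Theorem~\ref{equality condition} by the standard block-matrix dilation, now in $\cM\bar\otimes M_n$. Set $\tM:=\cM\bar\otimes M_n$ with trace $\ttau:=\tau\otimes\Tr$, and let
\[
X:=\mathrm{diag}(x_1,\dots,x_n)\in S(\tM,\ttau)_h .
\]
Let $A\in\tM$ be the matrix whose first column is $(a_1,\dots,a_n)^{T}$ and whose other entries are $0$. Then $A^*A=\mathrm{diag}\bigl(\sum_k a_k^*a_k,0,\dots,0\bigr)\le\mathbf 1_{\tM}$, so $A$ is a contraction. Moreover
\[
A^*XA=\mathrm{diag}\Bigl(\sum_k a_k^*x_ka_k,0,\dots,0\Bigr),\qquad
A^*X^2A=\mathrm{diag}\Bigl(\sum_k a_k^*x_k^2a_k,0,\dots,0\Bigr).
\]
Since $f(0)=0$, the functional calculus acts entrywise on diagonals, so $f(X)=\mathrm{diag}(f(x_1),\dots,f(x_n))$. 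Hence
\[
A^*f(X)A=\mathrm{diag}\Bigl(\sum_k a_k^*f(x_k)a_k,0,\dots,0\Bigr),\qquad
f(A^*XA)=\mathrm{diag}\Bigl(f\bigl(\textstyle\sum_k a_k^*x_ka_k\bigr),0,\dots,0\Bigr).
\]

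Next we check the hypotheses of Theorem~\ref{equality condition}. The spectra satisfy $\sigma(X)=\bigcup_k\sigma(x_k)$ (together with the point $0$ if $n\ge2$), and $\sigma(A^*XA)\subset\sigma\bigl(\sum a_k^*x_ka_k\bigr)\cup\{0\}$. Both sets lie in $I$ because $0\in I$. The integrability requirements follow directly from the assumptions and the diagonal form above. The traces are
\[
\ttau(A^*f(X)A)=\sum_k\tau(a_k^*f(x_k)a_k),\qquad \ttau(f(A^*XA))=\tau\Bigl(f\bigl(\textstyle\sum_k a_k^*x_ka_k\bigr)\Bigr).
\]
For the first identity we use linearity of $\tau$ on $L_1$, each summand being integrable. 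Theorem~\ref{equality condition} then gives the inequality for convex $f$.

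For strictly convex $f$, the theorem says equality holds iff $A^*X^2A=(A^*XA)^2$. Reading off the $(1,1)$ entry, this is exactly $\sum_k a_k^*x_k^2a_k=\bigl(\sum_k a_k^*x_ka_k\bigr)^2$; all other entries vanish on both sides. For concave (respectively strictly concave) $f$, we apply the above to $-f$, which is convex (respectively strictly convex), continuous, vanishes at $0$, and satisfies the same integrability conditions. This reverses the inequality, and the equality criterion is unchanged.

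The only point needing care is the bookkeeping in the measurable-operator setting. Products such as $A^*XA$ must be interpreted as strong products in $S(\tM,\ttau)$. We also need the identification $S(\tM,\ttau)=M_n(S(\cM,\tau))$ with entrywise operations, and the fact that the functional calculus of a block-diagonal operator is block diagonal. These are standard (see \cite{DPS}), and we expect no genuine obstacle beyond stating them.
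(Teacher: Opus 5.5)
Your proposal is correct and is essentially the paper's own proof: the same dilation in $\cM\bar\otimes M_n$ with $\mathrm{diag}(x_1,\dots,x_n)$ and the column contraction built from $(a_1,\dots,a_n)$, then Theorem~\ref{equality condition} for the inequality and its equality criterion, and $-f$ for the concave case. One minor correction: $\sigma(\mathrm{diag}(x_1,\dots,x_n))$ is simply $\bigcup_k\sigma(x_k)$, with no extra point $0$. This does not affect the argument, since $0\in I$ anyway.
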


\begin{proof}

Consider the algebra
$\cM\bar\otimes M_n$ with trace $\tau\otimes\Tr$.
Denote
\[
        z:={\rm diag}(x_1,\dots,x_n),
        \quad
        v:=
        \begin{pmatrix}
        a_1 & 0 & \cdots & 0\\
        a_2 & 0 & \cdots & 0\\
        \vdots & \vdots & \cdots & \vdots\\
        a_n & 0 & \cdots & 0\\
        \end{pmatrix}.
\]
Note that $v^*v={\rm diag}\left(\sum_{k=1}^na_k^*a_k, 0,\dots, 0\right)
\le \mathbf{1},$
which implies that $v$ is a contraction in $\cM\bar\otimes M_n$.
Since $f(0)=0$, it follows that
\[
        f(v^*zv)={\rm diag}\left(f\left(\sum_{k=1}^na_k^*x_ka_k\right), 0,\dots, 0\right)
\]
and 
\[
        v^*f(z)v={\rm diag}\left(\sum_{k=1}^na_k^*f(x_k)a_k, 0,\dots, 0\right).
\]

If $f$ is convex, then \cite[Theorem 3.4]{HaradaKosaki2010} (or Theorem \ref{equality condition}) implies that
\begin{align*}
\tau\left(f\left(\sum_{k=1}^na_k^*x_ka_k\right)\right)=
(\tau\otimes\Tr)(f(v^*zv))\le&
(\tau\otimes\Tr)(v^*f(z)v)\\
=&\sum_{k=1}^n \tau(a_k^*f(x_k)a_k).
\end{align*}
Assume that $f$ is strictly convex.
By Theorem \ref{equality condition}, equality holds
if and only if 
$v^*z^2v=(v^*zv)^2$.
Hence,
\[
\tau\left(f\left(\sum_{k=1}^na_k^*x_ka_k\right)\right)= 
\sum_{k=1}^n \tau(a_k^*f(x_k)a_k)
\]
if and only if   $v^*z^2v=(v^*zv)^2$ if and only if 
\[
\sum_{k=1}^na_k^*x_k^2a_k=\left(\sum_{k=1}^na_k^*x_ka_k\right)^2.
\]

The concave case follows by the same argument applied to $-f$.
\end{proof}

\begin{remark}
Let $f$, $\{x_1,\dots,x_n\}$ and $\{a_1,\dots,a_n\}$ be as in Corollary~\ref{prop:two contractions}.
In the convex case, the block-matrix construction used in the proof
of Corollary~\ref{prop:two contractions}, together with \cite[Corollary 3.6]{HaradaKosaki2010},
yields the following submajorization counterpart of its trace
inequality:
\[
\left(
f\left(\sum_{k=1}^n a_k^*x_ka_k\right)
\right)_+
\prec\!\prec
\left(
\sum_{k=1}^n a_k^*f(x_k)a_k
\right)_+.
\]
If, in addition, $x_k\geq0$ for every $k$ and $f$ is non-negative on
$I\cap[0,\infty)$, then
\[
f\left(\sum_{k=1}^n a_k^*x_ka_k\right)
\prec\!\prec
\sum_{k=1}^n a_k^*f(x_k)a_k.
\]
When $f$ is, in addition, strictly convex and the operators above belong to $L_1(\cM,\tau)$,
equality in Corollary \ref{prop:two contractions}
holds if and
only if
\[
\mu\left(t;f\left(\sum_{k=1}^n a_k^*x_ka_k\right)\right)
=\mu\left(t;\sum_{k=1}^n a_k^*f(x_k)a_k\right),\quad t>0.
\]
Indeed, suppose first that equality holds in Corollary~\ref{prop:two contractions}.
With $z$ and $v$ as in its proof, we have
\[
 v^*z^2v=(v^*zv)^2.
\]
It follows from Remark \ref{positive case} that
$ f(v^*zv)=v^*f(z)v$, i.e.,
\[
 f\left(\sum_{k=1}^{n}a_k^*x_ka_k\right)
 =
 \sum_{k=1}^{n}a_k^*f(x_k)a_k.
\]
Consequently,
\[
 \mu\left(t;
 f\left(\sum_{k=1}^{n}a_k^*x_ka_k\right)\right)
 =
 \mu\left(t;
 \sum_{k=1}^{n}a_k^*f(x_k)a_k\right),
 \qquad t>0.
\]
The converse implication is trivial.

In the concave case, the corresponding trace inequality in
Corollary~\ref{prop:two contractions} also admits a reverse submajorization refinement under
an additional monotonicity assumption.
More precisely,
if, in addition, $x_k\geq0$ for every $k$ and $f$ is non-negative and non-decreasing on
$I\cap[0,\infty)$, then 
applying
\cite[Lemma 3.2(ii)]{HaradaKosaki2010} (or \cite[Lemma 10]{BrownKosaki}) to $-f$ and using the same
block-matrix construction, we have
\[
 \mu\left(
 t;\sum_{k=1}^{n}a_k^*f(x_k)a_k
 \right)
 \leq
 \mu\left(
 t;f\left(\sum_{k=1}^{n}a_k^*x_ka_k\right)
 \right),
 \quad t>0,
\]
i.e.,
\[
 \sum_{k=1}^{n}a_k^*f(x_k)a_k
 \prec\!\prec
 f\left(\sum_{k=1}^{n}a_k^*x_ka_k\right).
\]
When $f$ is strictly concave and the two operators above belong to
$L_1(\mathcal M,\tau)$, equality in the concave inequality of
Corollary~\ref{prop:two contractions} holds if and only if
\[
 \mu\left(
 t;f\left(\sum_{k=1}^{n}a_k^*x_ka_k\right)
 \right)
 =
 \mu\left(
 t;\sum_{k=1}^{n}a_k^*f(x_k)a_k
 \right),
 \quad t>0.
\]
This follows by applying to $-f$ an argument analogous to that used
above in the strictly convex case.

\end{remark}

By applying the two-variable argument in
\cite[Remark 11]{BrownKosaki} inductively to successive partial sums
and using Remark \ref{positive case} at each step,
we obtain the equality conditions for the submajorization inequalities in
\cite[Theorem 5.3]{DS}.

\begin{proposition}\label{prop:eq con submajor}
Let $x_1,\dots,x_n\in S(\cM,\tau)_+$
and let $f:[0,\infty)\to[0,\infty)$ be continuous with $f(0)=0$.
Assume that all traces below are finite.
\begin{enumerate}
\item If $f$ is convex, then
\[
\sum_{k=1}^nf(x_k)\prec\!\prec
f\left(\sum_{k=1}^n x_k\right); 
\]
\item If $f$ is concave,
then the reverse inequality holds;
\item 
In the case where $f$ is strictly convex or strictly concave, 
\[
\mu\left(\sum_{k=1}^nf(x_k)\right)=\mu\left(f\left(\sum_{k=1}^n x_k\right)\right)
\]
if and only if $x_1,\dots,x_n$ are pairwise orthogonal.
\end{enumerate}
\end{proposition}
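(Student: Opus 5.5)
We argue by induction on $n$, reducing every step to the two-operator case. For $n=2$ we use the block-matrix device of \cite[Remark 11]{BrownKosaki}. Put $s:=x_1+x_2\ge0$ and $z:=\mathrm{diag}(s,0)\in S(\cM\bar\otimes M_2,\tau\otimes\Tr)$. Let $w:=s^{1/2}$, and take a partial isometry $u$ with $x_k^{1/2}=c_k w$, where $c_k\in\cM$ and $c_1^*c_1+c_2^*c_2=s(s)\le\mathbf 1$. Concretely, $c_k$ comes from the polar-type factorization $x_k^{1/2}=c_k s^{1/2}$, which exists because $x_k\le s$.

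With this factorization, $x_k=c_k s c_k^*$, so $f(x_k)=f(c_ksc_k^*)$. The row contraction $a:=(c_1^*\ c_2^*)$ lets us write
\[
\mathrm{diag}(x_1,x_2)\;\text{as a compression of}\;\mathrm{diag}(s,s)
\]
by the column isometry built from $c_1,c_2$. Applying Theorem~\ref{equality condition} in its submajorization form (Remark~\ref{positive case}(i), i.e.\ \cite[Theorem 3.5]{HaradaKosaki2010}) with $f\ge0$ convex gives $f(x_1)+f(x_2)\prec\!\prec f(s)$, which is (i). For concave $f$ we use instead the reverse singular value inequality \cite[Lemma 3.2(ii)]{HaradaKosaki2010}; such an $f$ is nondecreasing because it is nonnegative and concave on $[0,\infty)$. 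This gives (ii). The induction for $n$ terms then runs on partial sums, $s_m:=s_{m-1}+x_m$: from $f(s_{m-1})+f(x_m)\prec\!\prec f(s_m)$ and the inductive hypothesis $\sum_{k<m}f(x_k)\prec\!\prec f(s_{m-1})$, we combine using $\mu(y+y')\prec\!\prec\mu(y)+\mu(y')$ together with transitivity. Here we have to check the combination step. In the commutative-order setting one uses $\sum_{k<m}f(x_k)+f(x_m)\prec\!\prec f(s_{m-1})+f(x_m)$, and this needs the Ky Fan type fact that $y\prec\!\prec y'$ implies $y+t\prec\!\prec y'+t$. That fact fails in general, so instead we realize everything in $\cM\bar\otimes M_n$ with a single column contraction $(c_1,\dots,c_n)$, where $x_k^{1/2}=c_k s^{1/2}$ and $s=\sum_k x_k$. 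This proves (i) and (ii) in one shot, without induction.

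For (iii), the direction ``orthogonal $\Rightarrow$ equality'' is easy. If $x_jx_k=0$ for $j\ne k$, the supports are orthogonal, so $f(\sum x_k)=\sum f(x_k)$ by the functional calculus, since $f(0)=0$. For the converse, equality of singular value functions gives equality of traces, because the traces are finite. We therefore apply Corollary~\ref{prop:two contractions} in the form with $a_k:=c_k^*$ and every $x_k$ replaced by $s$. Explicitly, $\tau(f(\sum_k c_k s c_k^*))$ is to be compared with $\sum_k\tau(c_kf(s)c_k^*)=\tau(f(s))$. Rather than that, the cleanest route is to apply Theorem~\ref{equality condition} to the contraction $v=\mathrm{col}(c_1^*,\dots,c_n^*)$ (suitably transposed) and $\tilde x=\mathrm{diag}(s,\dots,s)$, which yields the equality condition $v^*\tilde x^2v=(v^*\tilde xv)^2$. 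This reads $c_ks^2c_j^*=c_ksc_k^*\,c_jsc_j^*$ summed appropriately. Using $c_ks^{1/2}=x_k^{1/2}$, it reduces to $x_k^{1/2}s\,x_j^{1/2}=\delta_{kj}x_k^2$ type identities, which are block relations in $M_n$. The off-diagonal blocks give $x_k^{1/2}s x_j^{1/2}=x_kx_j$, and the diagonal blocks give $x_k^{1/2}sx_k^{1/2}=x_k^2$. From the diagonal ones, $\sum_{j\ne k}x_k^{1/2}x_jx_k^{1/2}=0$, so each term $|x_j^{1/2}x_k^{1/2}|^2$ vanishes and hence $x_jx_k=0$.

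The main obstacle is setting up the dilation correctly, namely choosing a contraction $v$ and a self-adjoint $\tilde x$ so that $v^*f(\tilde x)v$ and $f(v^*\tilde xv)$ are exactly (unitarily equivalent to) $f(s)\oplus0$ and $\mathrm{diag}(f(x_1),\dots,f(x_n))$. It must also be done with $\mu$ of the block diagonal equal to $\mu(\sum f(x_k))$ only up to submajorization; indeed $\mu(\mathrm{diag}(y_k))\ne\mu(\sum y_k)$ in general, so some care is needed. We would first try the Bourin--Uchiyama arrangement: with $X$ the column $(x_1^{1/2},\dots,x_n^{1/2})$, we have $XX^*=[x_j^{1/2}x_k^{1/2}]$ and $X^*X=s$, so $f(XX^*)\simeq f(s)\oplus0$. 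The diagonal compression of $f(XX^*)$ is $\mathrm{diag}(f$ of the diagonal blocks$)$ up to Jensen, and the equality case of Theorem~\ref{equality condition} applied with the diagonal projections $e_k$ forces $e_k$ to commute with $XX^*$, i.e.\ $x_j^{1/2}x_k^{1/2}=0$ for $j\ne k$.
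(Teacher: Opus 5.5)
\textbf{The gap: parts (i) and (ii).} None of your dilations produces $\sum_k f(x_k)$. What they produce is the block-diagonal operator $\mathrm{diag}(f(x_1),\dots,f(x_n))$, so \cite[Theorem 3.5]{HaradaKosaki2010} gives at best $\mathrm{diag}(f(x_1),\dots,f(x_n))\prec\!\prec f(s)$.

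Passing from the block-diagonal operator to the sum goes the wrong way. For positive $y_k$, pinching $WW^*$ (with $W$ the column of the $y_k^{1/2}$) gives $\mathrm{diag}(y_1,\dots,y_n)\prec\!\prec\sum_k y_k$. The converse fails: for $y_1=y_2=p$ a projection with $0<\tau(p)<\infty$,
$\int_0^{\tau(p)}\mu(t;2p)\,dt=2\tau(p)>\tau(p)=\int_0^{\tau(p)}\mu(t;p\oplus p)\,dt$.

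The same defect affects two other places:
\begin{enumerate}
\item Your $n=2$ base case asserts $f(x_1)+f(x_2)\prec\!\prec f(s)$ without exhibiting a contraction $a$ and an operator $x$ for which $f(a^*xa)$ has the singular values of $f(x_1)+f(x_2)$.
\item In the concave case, the termwise bounds $\mu(c_kf(s)c_k^*)\le\mu(f(x_k))$ only yield $\mu(f(s))\prec\!\prec\sum_k\mu(f(x_k))$. That function dominates $\mu(\sum_k f(x_k))$ rather than being dominated by it.
\end{enumerate}
So the claim that (i) and (ii) follow ``in one shot'' is unproved. These submajorizations are precisely \cite[Theorem 5.3]{DS}. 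The paper quotes them rather than rederiving them, and you should do the same.

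\textbf{Part (iii): sound, by a different route.} Your argument works once the dilation is made precise, and it differs from the paper's. The paper applies the two-variable argument of \cite[Remark 11]{BrownKosaki} inductively along the partial sums $s_m=s_{m-1}+x_m$, using Remark~\ref{positive case} at each step. You treat all $n$ terms at once.

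A column or row arrangement of the $c_k$ is useless here: it gives $v^*\tilde xv=s\oplus0$ or $XX^*$, and the trace inequality becomes an identity. The right choice is $v=\mathrm{diag}(c_1^*,\dots,c_n^*)$ and $\tilde x=\mathrm{diag}(s,\dots,s)$. Equivalently, apply Theorem~\ref{equality condition} for each $k$ to the contraction $c_k^*$ and the operator $s$, together with $\sum_k\tau(c_kf(s)c_k^*)=\tau(f(s))$.

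Equality of the $\mu$'s gives equality of the finite traces, hence equality in each of these $n$ Jensen inequalities. The equality condition then consists only of the diagonal identities $c_ks^2c_k^*=x_k^2$, i.e.\ $x_k^{1/2}sx_k^{1/2}=x_k^2$. The ``off-diagonal'' relations you list do not arise. Your final step $\sum_{j\ne k}|x_j^{1/2}x_k^{1/2}|^2=0$ uses only the diagonal identities and is correct.

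Your Bourin--Uchiyama variant works equally well: compress $XX^*$ by the diagonal projections $e_k$, use $\tilde\tau(f(XX^*))=\tau(f(s))$ (valid since $f(0)=0$), and apply Proposition~\ref{prop:semifinite}. Compared with the paper's induction, your route needs neither induction nor Remark~\ref{positive case}. It also shows that (iii) depends only on the trace form of Jensen's inequality, not on (i) or (ii).
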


\section{A noncommutative Lamperti-type theorem}

In this section, we extend the Lamperti-type inequality and its equality condition (see \cite[Theorem~2.1]{Lamperti}) to the noncommutative setting.
Having this at hand, we obtain a characterization of isometries on a certain class of $F$-normed noncommutative  Orlicz space 
(see Section \ref{Orlicz pre}).
This extends results in \cite{Lamperti}.

Throughout this section, unless stated otherwise, we assume that $\cM$ is a semifinite von Neumann algebra equipped with a semifinite faithful normal trace $\tau$.

\subsection{Noncommutative version of Lamperti-type inequality}

We begin by introducing two propositions which serve as the main tools in  the proof of the noncommutative Lamperti-type inequality and its equality condition.
The following proposition follows from
Corollary \ref{prop:two contractions} by taking
$n=2,\,a_1=a_2=\frac{\sqrt{2}}{2}\mathbf{1}$.

\begin{proposition}\label{lem:strict-jensen}
Let $x,y\in S(\cM,\tau)_+$ and let
$f$ be continuous on $[0,\infty)$ with $f(0)=0$.  
Assume all traces
below are finite.
If $f$ is convex, then
\[
    2\tau \left(f \left(\frac{x+y}{2}\right)\right)
    \le
    \tau(f(x))+\tau(f(y)).
\]
In the case where $f$ is strictly convex, equality holds if and only if $x=y$.

If $f$ is concave, then the reverse inequality holds. 
In the case where $f$ is strictly concave,
equality again holds if and only if $x=y$.

\end{proposition}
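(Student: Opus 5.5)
We apply Corollary~\ref{prop:two contractions} with $n=2$, $x_1=x$, $x_2=y$ and $a_1=a_2=\frac{\sqrt2}{2}\mathbf 1$, so that $a_1^*a_1+a_2^*a_2=\mathbf 1$. Then
\[
\sum_{k=1}^2 a_k^*x_ka_k=\frac{x+y}{2},\qquad \sum_{k=1}^2 a_k^*f(x_k)a_k=\frac{f(x)+f(y)}{2}.
\]
We take $I=[0,\infty)$. This interval is non-degenerate and contains $0$, $\sigma(x)$, $\sigma(y)$ and $\sigma(\frac{x+y}{2})$, because all three operators are positive. The integrability hypotheses of the corollary are exactly the standing assumption that all traces are finite: $f(\frac{x+y}{2})\in L_1$, and $\frac12 f(x),\frac12 f(y)\in L_1$. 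The convex inequality of the corollary then reads $\tau(f(\frac{x+y}{2}))\le \frac12\tau(f(x))+\frac12\tau(f(y))$; multiplying by $2$ gives the claim. The concave case is the reverse inequality from the same corollary, or equivalently the convex case applied to $-f$.

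For the equality case with $f$ strictly convex (or strictly concave), the corollary says that equality holds if and only if
\[
\frac{x^2+y^2}{2}=\Bigl(\frac{x+y}{2}\Bigr)^2 .
\]
It remains to show this is equivalent to $x=y$. Multiplying out in the $*$-algebra $S(\cM,\tau)$ (strong sums and products), the identity becomes $2x^2+2y^2=x^2+xy+yx+y^2$, that is, $x^2-xy-yx+y^2=0$. The left-hand side is $(x-y)^2=|x-y|^2$, since $x-y$ is self-adjoint. So the condition is $|x-y|^2=0$, which forces $x-y=0$. Conversely, if $x=y$ then both sides of the identity equal $x^2$.

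The only point needing care is that the algebraic manipulations in $S(\cM,\tau)$ are legitimate. They are, because $S(\cM,\tau)$ is a $*$-algebra under strong operations and $|z|^2=z^*z=0$ implies $z=0$ for closed operators. Beyond this, the proposition is a direct specialisation of Corollary~\ref{prop:two contractions}.
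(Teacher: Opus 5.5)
Your proposal is correct and follows the paper's route exactly: the paper also obtains this proposition directly from Corollary~\ref{prop:two contractions} with $n=2$ and $a_1=a_2=\frac{\sqrt{2}}{2}\mathbf{1}$. The step the paper leaves implicit is showing that $\frac{x^2+y^2}{2}=\bigl(\frac{x+y}{2}\bigr)^2$ in $S(\cM,\tau)$ is equivalent to $x=y$. You supply it correctly by reducing the identity to $(x-y)^2=|x-y|^2=0$.
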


The following proposition is a direct consequence of Proposition \ref{prop:eq con submajor},
and extends result in \cite[Remark 11]{HaradaKosaki}.

\begin{proposition}\label{prop:rk-equality}
Let $x,y\in S(\cM,\tau)_+$ and let $f:[0,\infty)\to[0,\infty)$ be continuous with $f(0)=0$.
Assume that all traces below are finite.
\begin{enumerate}[label={\rm(\alph*)}]
\item If $f$ is convex, then
\[
\tau(f(x))+\tau(f(y))\le \tau(f(x+y)).
\]
In the case where $f$ is strictly convex, equality holds if and only if $xy=0$.
\item If $f$ is concave,
then the reverse inequality holds.
In the case where $f$ is strictly concave, equality holds if and only if $xy=0$.
\end{enumerate}
\end{proposition}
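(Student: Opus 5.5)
The plan is to deduce Proposition~\ref{prop:rk-equality} from Proposition~\ref{prop:eq con submajor} with $n=2$, $x_1=x$, $x_2=y$. We pass from submajorization to traces by letting $t\to\infty$, and we use Remark~\ref{positive case} to convert trace equality into equality of singular value functions.

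\emph{Inequalities.} In case (a), Proposition~\ref{prop:eq con submajor}(i) gives $f(x)+f(y)\prec\!\prec f(x+y)$. All operators involved are positive, since $f\ge0$. Letting $t\to\infty$ in $\int_0^t\mu(s;\cdot)\,ds$ and using $\tau(z)=\int_0^\infty\mu(s;z)\,ds$ for $z\ge0$, we get
\[
\tau(f(x))+\tau(f(y))=\tau(f(x)+f(y))\le\tau(f(x+y)).
\]
Case (b) is identical, using Proposition~\ref{prop:eq con submajor}(ii).

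\emph{Equality, sufficiency.} Suppose $xy=0$. Then $yx=(xy)^*=0$, so $x$ and $y$ commute and have orthogonal supports $s(x)\perp s(y)$. Since $f(0)=0$, the Borel functional calculus (working on the reducing subspaces $s(x)$, $s(y)$ and their complement) gives $f(x+y)=f(x)+f(y)$. Hence equality holds.

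\emph{Equality, necessity (the main step).} Suppose $f$ is strictly convex, or strictly concave, and $\tau(f(x))+\tau(f(y))=\tau(f(x+y))$. I want to upgrade this to $\mu(f(x)+f(y))=\mu(f(x+y))$, so that Proposition~\ref{prop:eq con submajor}(iii) yields $x\perp y$, i.e.\ $xy=0$.

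In general, submajorization together with equal (finite) total integrals does not force equal singular value functions. So I would reuse the structure behind Proposition~\ref{prop:eq con submajor}, namely the two-variable argument of \cite[Remark 11]{BrownKosaki}. Set $w=x+y$ and $a=x^{1/2}w^{-1/2}$ on $s(w)$, extended by $0$, and similarly $b=y^{1/2}w^{-1/2}$. Then $a^*a+b^*b=s(w)\le\mathbf 1$ and
\[
x=a\,w\,a^*,\qquad y=b\,w\,b^*.
\]
In the convex case, the Jensen trace inequality (Theorem~\ref{equality condition}, with $f(0)=0$) gives $\tau(f(awa^*))\le\tau(af(w)a^*)$ and $\tau(f(bwb^*))\le\tau(bf(w)b^*)$. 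Summing these two inequalities and using $\tau(af(w)a^*)+\tau(bf(w)b^*)=\tau(f(w))$, the hypothesized equality forces equality in each of the two inequalities separately. By the equality part of Theorem~\ref{equality condition}, applied with the contraction $a^*$, this gives $a w^2 a^*=(awa^*)^2$, i.e.\ $xw x=x^2$ after unwinding, and similarly for $y$. From these identities I would derive $xyx=0$, hence $y^{1/2}x=0$ and so $xy=0$.

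The concave case is handled symmetrically by applying the same argument to $-f$.

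I expect the main obstacle to be technical: justifying the factorization $x=awa^*$ when $w$ is unbounded and not invertible, and checking the integrability hypotheses needed to apply Theorem~\ref{equality condition}. If that route becomes too cumbersome, the fallback is to cite Remark~\ref{positive case}(i), which states directly that for nonnegative strictly convex $f$, trace equality is equivalent to equality of singular value functions. This is exactly the reduction used in the paragraph preceding Proposition~\ref{prop:eq con submajor}; after it, the conclusion follows from Proposition~\ref{prop:eq con submajor}(iii).
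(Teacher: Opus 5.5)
Your approach is essentially the paper's. The paper calls the statement a direct consequence of Proposition~\ref{prop:eq con submajor}, whose justification is the Brown--Kosaki two-variable decomposition $x=awa^*$, $y=bwb^*$ combined with the equality case of Jensen's trace inequality. Running that argument directly for $n=2$ through Theorem~\ref{equality condition}, as you do, is the clean way to pass from trace equality to orthogonality. This matters because part (iii) there is phrased via $\mu$, and, as you correctly observe, submajorization plus equal traces does not force equal singular value functions.

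Your technical worries are harmless:
\begin{enumerate}
\item From $0\le x\le w$ you get a contraction $a\in\cM$ with $x^{1/2}=aw^{1/2}$ and $a\,n(w)=0$, so no $w^{-1/2}$ is needed.
\item The identity $w^{1/2}(a^*a+b^*b)w^{1/2}=w$ forces $a^*a+b^*b=s(w)$.
\item $af(w)a^*\in L_1(\cM,\tau)$ because $f(w)\in L_1(\cM,\tau)$.
\item In the concave case $-f$ is convex with $(-f)(0)=0$, and Theorem~\ref{equality condition} imposes no sign condition.
\end{enumerate}

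There is, however, a computational slip at the decisive step. With $a=x^{1/2}w^{-1/2}$ one has $aw^2a^*=(aw^{1/2})\,w\,(aw^{1/2})^*=x^{1/2}wx^{1/2}$, not $xwx$. The identity $xwx=x^2$ that you wrote only says $x^3+xyx=x^2$, and $xyx=0$ does not follow from that. The correct equality condition is $x^{1/2}wx^{1/2}=x^2$. It gives $x^{1/2}yx^{1/2}=|y^{1/2}x^{1/2}|^2=0$, hence $y^{1/2}x^{1/2}=0$, so $yx=0$ and $xy=(yx)^*=0$.

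Also drop the fallback, for two reasons. First, Remark~\ref{positive case}(i) concerns a single pair $f(a^*xa)$, $a^*f(x)a$, not $f(x)+f(y)$ versus $f(x+y)$. So it cannot be cited directly to obtain $\mu(f(x)+f(y))=\mu(f(x+y))$; using it would route you through the same decomposition anyway. Second, it requires $f\ge0$, which fails for $-f$ in the concave case.
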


\begin{proof}[Proof of Theorem \ref{thm:lamperti}]
Let $a:=|x+y|^2$ and $b:=|x-y|^2.$
Then,
\begin{align*}
\frac{a+b}{2}=|x|^2+|y|^2.
\end{align*}

Assume first that $\psi$ is convex.  
By Proposition~\ref{lem:strict-jensen} and Proposition \ref{prop:rk-equality}, we have
\begin{equation}\label{inequal chain}
\begin{aligned}
\tau(\Phi(|x+y|))+\tau(\Phi(|x-y|))=&
\tau(\psi(a))+\tau(\psi(b))\\
\ge&
2\tau\left(\psi\left(\frac{a+b}{2}\right)\right)\\
=&2\tau\left(\psi\left(|x|^2+|y|^2\right)\right)\\
\ge&
2\tau\left(\psi\left(|x|^2\right)\right)+2\tau\left(\psi\left(|y|^2\right)\right)\\
=&2\tau(\Phi(|x|))+2\tau(\Phi(|y|)).
\end{aligned}
\end{equation}
The concave
inequality is identical, with both inequalities reversed.

It remains to prove the equality condition.  
Assume that $\psi$ is strictly convex (the strictly concave case is handled identically).
If equality holds, then 
both inequalities in \eqref{inequal chain} must be equalities.
By Proposition~\ref{lem:strict-jensen} and Proposition \ref{prop:rk-equality}, we have
\[
|x+y|^2=a=b=|x-y|^2, 
\quad
|x|^2|y|^2=0.
\] 
which implies that 
\begin{align*}
x^*y+y^*x=0, 
\quad 
s(x)s(y)=s\left(|x|^2\right)s\left(|y|^2\right)=0.
\end{align*}
Consequently, $xy^*=0$ and
\begin{align*}
    x^*y=r(x^*)(x^*y)s(y)+s(x)s(y)(y^*x)s(x)s(y)
    &=s(x)(x^*y)s(y)+s(x)(y^*x)s(y)\\
    &=s(x)(x^*y+y^*x)s(y)=0. 
\end{align*}


Conversely, if $x^*y=0=xy^*$, then
\[
a=|x+y|^2=|x-y|^2=b, 
\quad
s\left(|x|^2\right)s\left(|y|^2\right)=s(x)s(y)=0.
\]
By Proposition \ref{lem:strict-jensen} and Proposition \ref{prop:rk-equality}, both inequalities in \eqref{inequal chain} must be equalities.
\end{proof}

\begin{remark}
Taking $\Phi(t)=t^p$, we have $\psi(t)=t^{p/2}$,
which is strictly convex when $2<p<\infty$ and is strictly concave when $0<p<2$.  
In this case, Theorem \ref{thm:lamperti} recovers noncommutative Clarkson's inequality and its equality conditions
(see \cite[Theorem 1]{Yeadon} and \cite[Theorem A.1]{RX}).

Hirzallah--Kittaneh proved corresponding inequalities for
unitarily invariant norms of compact operators \cite{HKittaneh}, and
Dauitbek--Tleulessova obtained related versions in symmetric spaces of
$\tau$-measurable operators \cite{DT}.  

\end{remark}

\subsection{Characterization of isometries on $L_\Phi(\cM,\tau)$}

Throughout this subsection, unless stated otherwise,
we assume that $\Phi$ on $[0,\infty)$ is continuous and strictly increasing, with $\Phi(0)=0$.
Let $\cM$ and $\cN$ be two semifinite von Neumann algebras equipped with semifinite faithful normal traces $\tau$ and $\nu$, respectively.

Denote
\[
Y_\Phi(\cM,\tau):=\{x\in S(\cM,\tau):\tau(\Phi(|x|))<\infty\},
\]
\[
Y_\Phi(\cN,\nu):=\{x\in S(\cN,\nu):\nu(\Phi(|x|))<\infty\}.
\]

Then, we have $Y_\Phi(\cM,\tau)$ (respectively, $Y_\Phi(\cN,\nu)$) is a balanced and absorbing subset of $L_\Phi(\cM,\tau)$.
If $T:Y_\Phi(\cM,\tau)\to Y_\Phi(\cN,\nu)$ is a linear 
\footnote{
Note that 
$Y_\Phi(\cM,\tau)$ may not be a linear space. 
In this case, we say that $T$ is linear if 
\begin{enumerate}
\item $T(\lambda x)=\lambda T(x),\,\lambda\in \mathbb{C}$, $x,\lambda x\in Y_\Phi(\cM,\tau)$;
\item $T(x+y)=T(x)+T(y)$, 
$x,y,x+y\in Y_\Phi(\cM,\tau)$.
\end{enumerate}
}
mapping, 
then $T$ can be uniquely extended to a linear mapping, still denoted by  $T:L_\Phi(\cM,\tau)\to L_\Phi(\cN,\nu)$.
Indeed, 
define
\begin{align*}
    T(x):=\frac{1}{\lambda}T(\lambda x)\in L_\Phi(\cN,\nu),\quad x\in L_\Phi(\cM,\tau),
\end{align*}
where $\lambda>0$ is such that $\lambda x\in Y_\Phi(\cM,\tau)$ ($Y_\Phi(\cM,\tau)$ is absorbing).
For each $x\in L_\Phi(\cM,\tau)$,
if $0<\lambda_1<\lambda_2$ such that 
$\lambda_1x,\lambda_2x\in Y_\Phi(\cM,\tau)$,
then the linearity of $T$ on $Y_\Phi(\cM,\tau)$ implies that
\[
\frac{1}{\lambda_1}T(\lambda_1x)=
\frac{1}{\lambda_1}T\left(\frac{\lambda_1}{\lambda_2}\lambda_2x\right)
=\frac{1}{\lambda_2}T(\lambda_2x).
\]
Hence, the extension is unique.
Moreover, it is readily verified that $T(\lambda x)=\lambda T(x)$ for all $\lambda\in \mathbb{C}$.
For each $x,y\in L_\Phi(\cM,\tau)$, since $Y_\Phi(\cM,\tau)$ is balanced and absorbing, it follows that there exists a sufficiently small $\lambda>0$ such that $\lambda x,\lambda y,\lambda(x+y)\in Y_\Phi(\cM,\tau)$. 
Then,
\[
T(x+y)=\frac{1}{\lambda}T(\lambda(x+y))=
\frac{1}{\lambda}T(\lambda x)+\frac{1}{\lambda}T(\lambda y)
=T(x)+T(y).
\]
Consequently, $T$ is linear on $L_\Phi(\cM,\tau)$.
If, in addition, $T:Y_\Phi(\cM,\tau)\to Y_\Phi(\cN,\nu)$ is surjective, then
$T:L_\Phi(\cM,\tau)\to L_\Phi(\cN,\nu)$ is surjective.

Applying Lemma \ref{modular-Fnorm con}, we have the following result.

\begin{proposition}\label{mo-Fnorm}
$T|_{Y_\Phi(\cM,\tau)}$ is modular-preserving if and only if 
$T:L_\Phi(\cM,\tau)\to L_\Phi(\cN,\nu)$ is an isometry.
\end{proposition}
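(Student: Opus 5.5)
We prove the two implications separately, using Lemma~\ref{modular-Fnorm con} as the bridge between the modular $\rho_\Phi(x)=\tau(\Phi(|x|))$ and the Mazur--Orlicz $F$-norm. Throughout, $T$ denotes the linear extension to $L_\Phi(\cM,\tau)$ constructed above. The key observation is that Lemma~\ref{modular-Fnorm con} says $\cNorm{z}_\Phi$ is determined by the family of sets $\{\lambda>0:\rho_\Phi(z/\lambda)\le\lambda\}$. It also says $\rho_\Phi(x)$ can be read off from the norms of the dilates $\lambda x$.

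\emph{Modular-preserving $\Rightarrow$ isometry.} Let $x\in L_\Phi(\cM,\tau)$ and $\lambda>0$. By definition,
\[
\cNorm{Tx}_\Phi=\inf\{\lambda>0:\nu(\Phi(|T(x)/\lambda|))\le\lambda\}.
\]
Fix $\lambda$ with $\rho_\Phi(x/\lambda)<\infty$, so that $x/\lambda\in Y_\Phi(\cM,\tau)$. Linearity gives $T(x/\lambda)=T(x)/\lambda$, and modular preservation gives
\[
\nu(\Phi(|T(x)/\lambda|))=\tau(\Phi(|x/\lambda|)).
\]
If instead $\rho_\Phi(x/\lambda)=\infty$, then $\lambda$ belongs to neither admissible set. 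To make this case work, I also need: $\nu(\Phi(|Tz|))<\infty$ forces $\tau(\Phi(|z|))<\infty$. I plan to get this from modular preservation applied to $tz$ for small $t$, together with monotone convergence as $t\uparrow 1$. This is the delicate point.

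The issue is that modular preservation is only assumed on $Y_\Phi$. So I will show directly that the function $t\mapsto\nu(\Phi(|tTz|))$, which is nondecreasing and left-continuous in $t$ by normality (as in the proof of Lemma~\ref{modular-Fnorm con}), agrees with $t\mapsto\tau(\Phi(|tz|))$ on every $t$ where the latter is finite. By monotonicity, the set of such $t$ is an interval $(0,t_0)$ or $(0,t_0]$. Suppose the left side were finite at $t_0$ while the right side is infinite there. Left-continuity of the right side (again by normality) gives $\tau(\Phi(|t_0z|))=\lim_{t\uparrow t_0}\tau(\Phi(|tz|))=\lim_{t\uparrow t_0}\nu(\Phi(|tTz|))$, which is finite, a contradiction. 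Hence the two admissible sets coincide and $\cNorm{Tx}_\Phi=\cNorm{x}_\Phi$.

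\emph{Isometry $\Rightarrow$ modular-preserving.} Let $x\in Y_\Phi(\cM,\tau)$ and put $c=\tau(\Phi(|x|))$. If $c=0$, then $x=0$ and there is nothing to prove. Otherwise Lemma~\ref{modular-Fnorm con} gives, for every $\lambda>0$,
\[
\tau(\Phi(|x|))\le\lambda\iff\cNorm{\lambda x}_\Phi\le\lambda\iff\cNorm{\lambda Tx}_\Phi\le\lambda\iff\nu(\Phi(|Tx|))\le\lambda,
\]
where the middle step uses that $T$ is an isometry. In particular, taking $\lambda=c$ shows $\nu(\Phi(|Tx|))\le c<\infty$, so $Tx\in Y_\Phi(\cN,\nu)$ and the lemma indeed applies to $Tx$. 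Since the displayed equivalence holds for all $\lambda>0$, the two numbers $\tau(\Phi(|x|))$ and $\nu(\Phi(|Tx|))$ are equal. This also shows that $T$ maps $Y_\Phi(\cM,\tau)$ into $Y_\Phi(\cN,\nu)$.

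The main obstacle is the finiteness bookkeeping in the first direction, namely showing that the domains where the two modulars are finite match up. Left-continuity via normality of the traces should handle it.
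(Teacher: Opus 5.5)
Your second direction (isometry $\Rightarrow$ modular-preserving) is exactly the paper's argument and is fine. Lemma~\ref{modular-Fnorm con} holds on all of $L_\Phi(\cN,\nu)$, so the finiteness bootstrap is not even needed. The first direction, however, has a genuine gap.

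Write $F(t)=\tau(\Phi(t|z|))$ and $G(t)=\nu(\Phi(t|T(z)|))$. You correctly get $F=G$ wherever $F<\infty$. Your left-continuity argument also correctly disposes of the case $\{F<\infty\}=(0,t_0)$: there $G(t_0)=\infty$, hence $G=\infty$ on $[t_0,\infty)$ by monotonicity. But suppose $\{F<\infty\}=(0,t_0]$ is closed. Then $F(t_0)=G(t_0)<\infty$ and $F=\infty$ on $(t_0,\infty)$, and nothing in your argument prevents $G$ from being finite on some $(t_0,t_1)$. Left-continuity at a point $t_1>t_0$ gives nothing, because $F$ and $G$ are not known to agree on $(t_0,t_1)$. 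So the conclusion that the two admissible sets coincide does not follow. What you have actually proved is only $\{\lambda:\tau(\Phi(|z|/\lambda))\le\lambda\}\subseteq\{\lambda:\nu(\Phi(|T(z)|/\lambda))\le\lambda\}$, i.e.\ $\cNorm{T(z)}_\Phi\le\cNorm{z}_\Phi$.

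This uncovered case is neither vacuous nor harmless, because for non-$\Delta_2$ $\Phi$ the modular can jump from a finite value to $+\infty$. Take $\Phi(s)=e^s-1$ and $z$ on $(0,\infty)$ with $m(\{z>s\})=e^{-s}(1+s)^{-2}$. Then $F(t)<\infty$ iff $t\le 1$, and $F(1)=1$. For $w=2z$, with $F_w,G_w$ defined analogously, this gives $\{F_w<\infty\}=(0,\tfrac12]$, $F_w(\tfrac12)=1<2$ and $\cNorm{w}_\Phi=2$. So if $G_w(t)\le 1/t$ for a single $t>\tfrac12$, you would get $\cNorm{T(w)}_\Phi<\cNorm{w}_\Phi$.

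Excluding this requires $G(t)=\infty$ whenever $F(t)=\infty$. Monotonicity and normality cannot deliver that; you need structural information about $T$. The paper's one-line proof of this direction (``it follows from the definition'') passes over the same point, so you have located the real difficulty. One way to close it: suppose $T$ is disjointness-preserving, which holds when $\Phi(\sqrt t)$ is strictly convex or strictly concave, by Theorem~\ref{traceineq}, exactly as in the proof of Theorem~\ref{Tform}. Take $p_n=e^{|z|}(1/n,n)$. Then $t\,zp_n\in Y_\Phi(\cM,\tau)$ for every $t>0$, and $zp_n$ and $z(\mathbf 1-p_n)$ are disjoint in the sense $a^*b=0=ab^*$. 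Hence $G(t)\ge\nu(\Phi(t|T(zp_n)|))=\tau(\Phi(t|z|)p_n)\uparrow F(t)$. Note also that the proof of Theorem~\ref{Tform} itself uses only the direction you proved correctly.
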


\begin{proof}
Assume that $T|_{Y_\Phi(\cM,\tau)}$ is modular-preserving.
For each $x\in L_\Phi(\cM,\tau)$,
since 
\[
\tau\left( \Phi\left(\frac{|x|}{\lambda }\right) \right)   = 
\nu\left( \Phi\left(\frac{|T(x)|}{\lambda }\right) \right) 
\]
for all $\lambda>0$ such that $\frac{x}{\lambda }\in Y_\Phi(\cM,\tau)$, 
it follows from the definition that 
\[
\norm{x}_\Phi =  \cNorm{T(x)}_\Phi,
\]
i.e., $T:L_\Phi(\cM,\tau)\to L_\Phi(\cN,\nu)$ is an isometry.

Conversely, assume that $T:L_\Phi(\cM,\tau)\to L_\Phi(\cN,\nu)$ is an isometry.
For each $x\in Y_\Phi(\cM,\tau)$,
by Lemma \ref{modular-Fnorm con}, we have
\begin{align*}
\tau(\Phi(|x|))
=&\inf\left\{\lambda>0:\tau\left(\Phi\left(|x|\right)\right)\le \lambda\right\}
=\inf\{\lambda>0:\cNorm{\lambda x}_\Phi\le \lambda\}\\
=&
\inf\{\lambda>0:\cNorm{\lambda T(x)}_\Phi\le \lambda\}
=\inf\{\lambda>0:\nu(\Phi(|T(x)|))\le \lambda\}\\
=&\nu(\Phi(|T(x)|)),
\end{align*}
i.e., $T|_{Y_\Phi(\cM,\tau)}$ is modular-preserving.
The proof is complete.
\end{proof}

\begin{lemma}\label{Phibp}
Assume that $\Phi$ is a Borel function with $\Phi(0)=0$.   
If $b\ge 0$ is affiliated with $\cM$ and $p\in P(\cM)$ commutes with all spectral projections of $b$,
then
\[
\Phi(bp)=\Phi(b)p.
\]
\end{lemma}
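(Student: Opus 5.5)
Since $p$ commutes with every spectral projection of $b$, I will work inside the abelian von Neumann algebra $\cA$ generated by $p$ and the spectral projections $\{e^b(B)\}$. Both $b$ and $p$ are affiliated with $\cA$, so the strong product $bp$ is a positive self-adjoint operator affiliated with $\cA$. On the Hilbert space, $p$ reduces $b$: $p\cH$ and $(\mathbf 1-p)\cH$ are reducing subspaces. Hence
\[
bp=b|_{p\cH}\oplus 0|_{(\mathbf 1-p)\cH}.
\]

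First I will identify the spectral measure of $bp$. For a Borel set $B\subset[0,\infty)$, I claim
\[
e^{bp}(B)=e^b(B)p+\chi_B(0)(\mathbf 1-p).
\]
This should follow because the right-hand side is a projection-valued measure built from commuting projections, and integrating $\lambda$ against it gives $b$ on $p\cH$ and $0$ on $(\mathbf 1-p)\cH$. Uniqueness of the spectral resolution then gives equality. Alternatively, one can use a $*$-isomorphism of $\cA$ onto some $L_\infty(\Omega)$, under which $b\mapsto \beta$, $p\mapsto\chi_P$ and $bp\mapsto\beta\chi_P$. The Borel functional calculus becomes composition, so
\[
\Phi(\beta\chi_P)=\Phi(\beta)\chi_P+\Phi(0)\chi_{\Omega\setminus P}=\Phi(\beta)\chi_P,
\]
using $\Phi(0)=0$.

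Then I will apply the Borel functional calculus:
\[
\Phi(bp)=\int\Phi(\lambda)\,de^{bp}(\lambda)=\Big(\int\Phi\,de^b\Big)p+\Phi(0)(\mathbf 1-p)=\Phi(b)p.
\]
Here the first term uses that $p$ commutes with $e^b$, so the integral restricted to $p\cH$ is $\Phi(b)$ restricted to $p\cH$. The second term vanishes since $\Phi(0)=0$. Finally, $\Phi(b)p$ is understood as a strong product, equal to $\Phi(b)|_{p\cH}\oplus0$.

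The only delicate point is handling unbounded operators and domains. Since $\Phi$ may be unbounded, $\Phi(b)$ is unbounded, so I should check that the products and direct sums agree exactly as closed operators, not just on a core. The reducing-subspace decomposition handles this: the domains are $\mathrm{dom}(\Phi(b))\cap p\cH$ plus $(\mathbf 1-p)\cH$ on both sides. I can justify this by truncating with the spectral projections $e^b[0,n]$ and passing to the limit, or by relying directly on the abelian-algebra picture, where measurable functions make the identity immediate.
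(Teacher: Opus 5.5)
Your proposal is correct and follows essentially the same route as the paper: $p$ reduces both $b$ and $\Phi(b)$, so $bp=b_p\oplus 0$, the functional calculus splits over this decomposition, and $\Phi(0)=0$ kills the $(\mathbf{1}-p)\cH$ component. Your explicit formula $e^{bp}(B)=e^b(B)p+\chi_B(0)(\mathbf{1}-p)$ is a concrete justification of the paper's step $\Phi(b_p\oplus 0)=\Phi(b_p)\oplus 0$, and it also settles the domain issue you flag, since $\Phi(bp)$ and $\Phi(b)p$ both have domain $\{\xi: p\xi\in\cD(\Phi(b))\}$.
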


\begin{proof}
Since $pe^{b}(\delta)=e^{b}(\delta)p,\, \delta\subset [0,\infty),$
it follows from \cite[Proposition 7.7]{Haase2020} that
\[
pb\subseteq bp,\quad p\Phi(b)\subseteq\Phi(b)p,
\]
i.e.,
$p\cH$ and $(\mathbf{1}-p)\cH$ are reducing subspaces for both $b$ and $\Phi(b)$.

Let $b_p$ and $b_{\mathbf{1}-p}$ denote the parts of $b$ in $p\cH$ and $(\mathbf{1}-p)\cH$,
respectively. With respect to the orthogonal decomposition
$\cH=p\cH\oplus {(\mathbf{1}-p)}\cH,$
we have
\begin{align}\label{b direct decomposition}
    b=b_p\oplus b_{\mathbf{1}-p},
    \quad
    bp=b_p\oplus 0.
\end{align}
For all $\delta\subset(0,\infty)$, by the spectral theorem, we have $e^{b_p}(\delta)\le p$. 
Therefore,
\begin{align}\label{Phi(bp)}
\Phi(bp)
\stackrel{\eqref{b direct decomposition}}{=}
      \Phi(b_p\oplus 0) 
      \stackrel{\Phi(0)=0}{=}\Phi(b_p)\oplus 0.
\end{align}
On the other hand,
we similarly obtain that
\[
    \Phi(b)p
    \stackrel{\eqref{b direct decomposition}}{=}
    \Phi(b_p\oplus b_{\mathbf{1}-p})p
      =\left(\Phi(b_p)\oplus\Phi(b_{\mathbf{1}-p})\right)p
      =\Phi(b_p)\oplus 0,
\]
which together with \eqref{Phi(bp)} yields that
\[
\Phi(bp)=\Phi(b)p.
\]
\end{proof}

Let $\mathcal E\subset S(\cM,\tau)$ be an $\cM$-bimodule.
A net $\{x_i\}_{i\in I}\subset \mathcal E$ is said to be order convergent to $x$ (denoted by $x_i\xrightarrow{(o)}x$) whenever there exists a net $\{u_i\}_{i\in I}\subset \mathcal E_+$ such that $u_i\downarrow 0$ and $|x_i-x|\leq u_i$ for each $i \in I$.

A complex-linear mapping $J:\cM\to\cN$ 
is called a {\it Jordan $*$-homomorphism} if 
\[
J(x^2) = J(x)^2  \text{ and }  J(x^*) = J(x)^* ,\, x \in \cM.
\]
We call $J$ a \textit{Jordan $*$-monomorphism} if it is injective,
and a \textit{Jordan $*$-isomorphism} if it is bijective.
A Jordan $*$-isomorphism $J$ is necessarily normal \cite[Appendix A]{RR}.
Further details regarding Jordan homomorphisms may be
found in \cite{KadisonRingroseI, BraRobinbook}.






\begin{proof}[Proof of Theorem \ref{Tform}]

Let $x,y\in L_\Phi(\cM,\tau)$ be such that $x^*y=0=xy^*$.
To prove $T$ is disjointness preserving,
without loss of generality, we may assume that $x,y\in Y_\Phi(\cM,\tau)$ ($Y_\Phi(\cM,\tau)$ is absorbing).
Note that $T$ is modular-preserving (see Proposition \ref{mo-Fnorm}). 
By Theorem \ref{traceineq}, we have 
\begin{align*}
\nu\left(\Phi\left(|T(x+y)|\right)\right)+\nu\left(\Phi\left(|T(x-y)|\right)\right)
=&
\tau(\Phi(|x+y|))+\tau(\Phi(|x-y|))\\
=&2\tau(\Phi(|x|))+2\tau(\Phi(|y|))\\
=&
2\nu\left(\Phi\left(|T(x)|\right)\right)+2\nu\left(\Phi\left(|T(y)|\right)\right),
\end{align*}
which together with Theorem \ref{traceineq} implies that $T(x)^*T(y)=0=T(x)T(y)^*$. Consequently, $T$ is disjointness-preserving.


We claim that $T$ is order-measure continuous.
Let $x\in L_\Phi(\cM,\tau)$ and $\{x_i\}_{i\in I}\subset L_\Phi(\cM,\tau)$ be such that $x_i\xrightarrow{(o)}x$.
Then, there exists $\{u_i\}_{i\in I}\subset L_\Phi(\cM,\tau)_+$ satisfying $|x-x_i|\le u_i\downarrow 0$. 
Since $\{u_i\}_{i\in I}\subset S_0(\cM,\tau)$ (see Section \ref{Orlicz pre}), it follows from \cite[Theorem 2.6.3]{DPS} that $u_i\xrightarrow{t_m}0$, which together with \eqref{m-mu} and \cite[Proposition 3.2.7(v)]{DPS} yields that
\begin{align}\label{muui}
\mu(t;u_i)\downarrow_i 0,
    \quad t>0.
\end{align}
Fixed $i_{0}\in I$, 
there exists
$\lambda>0$ 
such that $\lambda u_{i_0}\in Y_\Phi(\cM,\tau)$.
Since $\Phi$ is strictly increasing and continuous with $\Phi(0)=0$, 
it follows from \cite[Proposition 3.2.8]{DPS} that 
\begin{align}\label{mu(Phi ui)}
\mu(t;\Phi(\lambda u_i))
=\Phi(\lambda\mu(t;u_i))\stackrel{\eqref{muui}}{\downarrow_i} 0,
\quad t> 0,
\end{align} 
i.e., $\Phi(\lambda u_i)\xrightarrow{t_m}0$
(see \eqref{m-mu}).
Note that \[
\tau(\Phi(\lambda u_i))\to 0.
\]
Indeed, 
assume that
$\tau(\Phi(\lambda u_i))\not\to 0.$
Then, there exist $\varepsilon_0>0$ and a sequence 
$\{u_{i_n}\}\subset \{u_i\}_{i\ge i_0}$ 
such that 
$$\Phi\left(\lambda u_{i_n}\right)\xrightarrow{t_m}_n 0
\quad \text{and}\quad
\tau(\Phi(\lambda u_{i_n}))\ge \varepsilon_0.$$
Since $\lambda u_{i_0}\in Y_\Phi(\cM,\tau)$, 
it follows from \eqref{mu(Phi ui)} that 
$\mu(\Phi(\lambda u_{i_n}))\le \mu(\Phi(\lambda u_{i_0}))\in L_1(0,\infty)$.
By \cite[Theorem 3.4.21]{DPS}, we have $\tau(\Phi(\lambda u_{i_n}))\to 0$, which is a contradiction.
Therefore, 
\[
\tau(\Phi(\lambda u_i))\to 0.
\]
Since $\Phi$ is strictly increasing and continuous with $\Phi(0)=0$,
it follows from \cite[Proposition~3.2.7(v) and Proposition~3.2.8]{DPS} that
\begin{align}\label{mu(t) inequality}
\mu(\Phi\left(\lambda|x-x_i|\right))=\Phi(\mu(\lambda|x-x_i|))
\le \Phi(\mu(\lambda u_i))=\mu(\Phi(\lambda u_i)),
\quad i\ge i_0.
\end{align}
Recall that $T$ preserves modular. Then,
\[
 \nu\left(\Phi\left(\lambda\left|T(x-x_i)\right|\right)\right)
     =\nu(\Phi(|T(\lambda(x-x_i))|))
     =
    \tau\left(\Phi\left(\lambda|x-x_i|\right)\right) 
\stackrel{\eqref{mu(t) inequality}}{\le}
    \tau\left(\Phi(\lambda u_i)\right)\to 0.
\]
By \cite[Proposition 3.2.7(i) and Proposition 3.3.9]{DPS}, we have
$\lambda\mu\left(t;|T(x-x_i)|\right)=\mu\left(t;\lambda|T(x-x_i)|\right)\to_i 0$ for all $t>0$,
i.e., $$|T(x-x_i)|\xrightarrow{t_m}0$$
(see \eqref{m-mu}), which proves our claim.

By \cite[Theorem 3.5]{FHKX},
we have 
\begin{align}\label{T:bounded}
T(x)=ubJ(x),\quad x\in L_\Phi(\cM,\tau)\cap \cM,
\end{align}
where $u\in \cN$ is a partial isometry,
$b$ is a (possibly not measurable) positive operator affiliated with $\cN$ and $J:\cM\to \cN$ is a (normal) Jordan $*$-monomorphism.
In particular, 
\[
u^*u=s(b)=J(\mathbf{1}),
\]
\begin{align}\label{eb}
e^b(\delta)=
so-\lim\limits_{p\in P(\cM)\cap\cF(\cM,\tau)}e^{|T(p)|}(\delta)\in Z(J(\cM)),
\quad \delta\subset \mathbb{R},
\end{align}
and $u^*T(\cdot)$
is normal (see the proof of \cite[Theorem 3.5]{FHKX}, or \cite{HSZ20}).

For each $\tau$-finite $e\in P(\cM)$, 
by Proposition \ref{mo-Fnorm} and Lemma \ref{Phibp}, 
we obtain that
\[
\tau(e)
\stackrel{\Phi(1)=1}{=}
\tau(\Phi(e))=\nu(\Phi(|T(e)|))
\stackrel{\eqref{T:bounded}}{=}
\nu(\Phi(bJ(e)))
\stackrel{\text{Lemma~\ref{Phibp}}}{=}
\nu(\Phi(b)J(e)).\]
By uniform approximation and Lemma \ref{Phibp}, we obtain
\begin{align}\label{traceeq Ftau}
\tau(x)=\nu(\Phi(b)J(x))=\nu(\Phi(bJ(s(x)))J(x))
\quad 0\le x\in \cF(\cM,\tau).
\end{align}

For each $0\le x\in L_1(\cM,\tau)\cap \cM$, 
since $x\in S_0(\cM,\tau)$, 
it follows that
\[
e_n:=e^x\left(\frac{1}{n},\infty\right)\in \cF(\cM,\tau),\quad 0<n\in \mathbb{N}.
\]
Then, we have $J(e_n)\uparrow J(s(x))$ ($J$ is normal).
Furthermore, by 
Lemma \ref{Phibp}, we have
\begin{equation}\label{sup nu}
\nu(\Phi(b)J(xe_n))=\nu(\Phi(bJ(e_n))J(xe_n))
\stackrel{\eqref{traceeq Ftau}}{=}
\tau(xe_n)\uparrow\tau(x).
\end{equation}

Next, we prove that $\Phi(b)J(xe_n)\uparrow\Phi(b)J(x)$.
By \eqref{eb} and \cite[Proposition 5.27 and Theorem 5.23]{Schmudgen}, 
there exists a unique spectral measure $E$ on $\cB(\mathbb{R}^2)$ such that
\begin{align*}
\Phi(b)=\int_{\mathbb{R}^2}t_1dE(t_1,t_2),
\quad
J(x)=\int_{\mathbb{R}^2}t_2dE(t_1,t_2).
\end{align*}
Noting that $J(x)$ is bounded and $\Phi(b)$ is closed,
we have $\Phi(b)J(x)$ is closed, 
which together with \cite[Theorem 2.4(b)]{Haase2020} yields that
\begin{align}\label{PhibJ(x) form}
\Phi(b)J(x)=
\overline{\int_{\mathbb{R}^2}t_1dE(t_1,t_2)\int_{\mathbb{R}^2}t_2dE(t_1,t_2)}=\int_{\mathbb{R}^2}t_1t_2dE(t_1,t_2)\ge 0.
\end{align}
By \cite[Proposition 2.9.2(iv)]{DPS}, we have
\[
J(e_n)=\chi_{(1/n,\infty)}(J(x))=\int_{\mathbb{R}^2}\chi_{\{\mathbb{R}\times(1/n,\infty)\}}dE(t_1,t_2).
\]
Similarly to \eqref{PhibJ(x) form}, we obtain
\begin{align*}
\Phi(b)J(xe_n)=\Phi(b)J(x)J(e_n)=
\int_{\mathbb{R}^2}t_1t_2\chi_{(1/n,\infty)}(t_2)dE(t_1,t_2).
\end{align*}
For each $\xi\in \cD\left((\Phi(b)J(x))^\frac{1}{2}\right)$,
we have
\begin{align*}
\norm{(\Phi(b)J(xe_n))^\frac{1}{2}\xi}_{\cH}^2=&
\langle\Phi(b)J(xe_n)\xi,\xi\rangle=
\int_{\mathbb{R}^2}t_1t_2\chi_{(1/n,\infty)}(t_2)
d\langle E(t_1,t_2)\xi,\xi\rangle\\
\uparrow_n&
\int_{\mathbb{R}^2}t_1t_2d\langle E(t_1,t_2)\xi,\xi\rangle
=\langle\Phi(b)J(x)\xi,\xi\rangle\\=&
\norm{(\Phi(b)J(x))^\frac{1}{2}\xi}_{\cH}^2<\infty.
\end{align*}
Moreover, this implies
\[
\cD\left((\Phi(b)J(x))^\frac{1}{2}\right)
=\left\{\xi\in \bigcap_n\cD\left((\Phi(b)J(xe_n))^\frac{1}{2}\right)
:\sup_n\norm{(\Phi(b)J(xe_n))^\frac{1}{2}\xi}_{\cH}^2<\infty\right\}.
\]
Hence, $\Phi(b)J(xe_n)\uparrow\Phi(b)J(x)$.
This together with \eqref{sup nu} and \cite[Theorem~3.4.12]{DPS}
implies that
\[
\nu(\Phi(b)J(x))=\sup_n\nu(\Phi(b)J(x e _n))\stackrel{\eqref{sup nu}}{=}\tau(x).
\]
The proof is complete.

\end{proof}

\begin{remark}
\leavevmode\par
\begin{enumerate}
\item Proposition \ref{mo-Fnorm} shows that Theorem \ref{Tform}  extends the description in \cite[Theorem 4.1]{Lamperti} to the noncommutative setting.
\item When $\Phi(t)=t^p$, $0<p<\infty$,$p\ne 2$, we have $\rho_\Phi(x)
=\tau(|x|^p)
=\norm{x}_p^p$.
 Theorem~\ref{Tform} provides a characterization of (not necessarily surjective) isometries on $L_p(\cM,\tau)$, which extends the result in \cite[Theorem 2]{Yeadon}.
\item The condition $\Phi(1)=1$ in Theorem \ref{Tform} serves only for normalization purposes and does not alter the general form of isometries.
If $\Phi(1)\ne 1$, then the trace relation takes the form
\[
\Phi(1)\tau(x)=\nu(\Phi(b)J(x)), \quad 0\le x\in L_1(\cM,\tau)\cap \cM.
\]
\end{enumerate}
\end{remark}


\begin{thebibliography}{99}





\bibitem{BGL2022}
D. Blecher, S. Goldstein, L. Labuschagne,
{\it Abelian von Neumann algebras, measure algebras and
\(L^\infty\)-spaces,}
Expo. Math. \textbf{40} (2022), no.~3, 758--818.


\bibitem{Bogachev2007}
V. Bogachev, {\it Measure Theory,} Vol. I, Springer--Verlag, Berlin, 2007.

\bibitem{BraRobinbook}
O. Bratteli, D. Robinson,
{\it Operator algebras and quantum statistical mechanics I,}
2nd ed., Springer-Verlag, New York, 1987.


\bibitem{BrownKosaki}
L.  Brown, H.~Kosaki,
{\it Jensen's inequality in semi-finite von Neumann algebras},
J. Operator Theory \textbf{23} (1990), no.~1, 3--19.


\bibitem{CarlenFrankLarson2025}
E.  Carlen, R. Frank, S. Larson,
{\it A Jensen inequality for partial traces and applications to partially
semiclassical limits},
Lett. Math. Phys. \textbf{115} (2025), no. 3, 15 pp.

\bibitem{Choi1974}
M. Choi,
{\it A Schwarz inequality for positive linear maps on $C^{*}$-algebras},
Illinois J. Math. 18 (1974), no. 4, 565--574.



\bibitem{DPS}
P. Dodds, B. de Pagter, F. Sukochev,
{\it Noncommutative integration and operator theory},
Progress in Mathematics, Birkhäuser, Cham, 2024.


\bibitem{DS}
P. Dodds and F. Sukochev,
{\it Submajorisation inequalities for convex and concave functions of sums of measurable operators},
Positivity \textbf{13} (2009), no.~1, 107--124.

\bibitem{DT}
D.~Dauitbek, A.~M. Tleulessova,
{\it Non-commutative Clarkson inequalities for symmetric space norm of
$\tau$-measurable operators},
Int. J. Math. Anal. (Ruse) \textbf{7} (2013), no.~18,
883--890.



\bibitem{Davis1957}
C. Davis,
{\it A Schwarz inequality for convex operator functions},
Proc. Amer. Math. Soc. 8 (1957), no. 1, 42--44.



\bibitem{FackKosaki}
T.~Fack,  H.~Kosaki,
{\it Generalized $s$-numbers of $\tau$-measurable operators},
Pacific J. Math. \textbf{123} (1986), no.~2, 269--300.


\bibitem{FHKX}
K. Fang, J. Huang, K. Kudaybergenov, R. Xu,
{\it Disjointness-preserving mappings on Calkin operator spaces and positive isometries},
arXiv:2607.26563, 2026.

\bibitem{Haase2020}
M.~Haase,
{\it The functional calculus approach to the spectral theorem},
Indag. Math. (N.S.),
\textbf{31} (2020), no.~6, 1066--1098.



\bibitem{HansenPedersen1982}
F. Hansen, G.  Pedersen,
{\it Jensen's inequality for operators and L\"owner's theorem},
Math. Ann. 258 (1982), no. 3, 229--241.




\bibitem{HansenPedersen2003}
F. Hansen, G. K. Pedersen,
{\it Jensen's operator inequality},
Bull. Lond. Math. Soc. 35 (2003), no. 4, 553--564.





\bibitem{HaradaKosaki}
T. Harada, H. Kosaki,
{\it On equality condition for trace Jensen inequality in semi-finite von
Neumann algebras},
Internat. J. Math.  \textbf{19} (2008), no.~4, 481--501.


\bibitem{HaradaKosaki2010}
T. Harada, H. Kosaki,
{\it Trace Jensen inequality and related weak majorization in semi-finite
von Neumann algebras},
J. Operator Theory 63 (2010), no.~1, 129--150.






\bibitem{HKittaneh}
O.~Hirzallah, F.~Kittaneh,
{\it Non-commutative Clarkson inequalities for unitarily invariant norms},
Pacific J. Math. \textbf{202} (2002), no.~2, 363--369.


\bibitem{HLS}
J. Huang, G. Levitina, F. Sukochev,  
{\it Completeness of symmetric $\Delta$-normed spaces of $\tau$-measurable operators}, Studia Math. 237 (2017), no. 3, 201--219.


\bibitem{HSZ20}
J. Huang, F. Sukochev, D. Zanin,
{\it Logarithmic submajorisation and order-preserving linear operators},
J. Funct. Anal. 278 (2020), no. 4, 108352, 44 pp.



\bibitem{Jensen1906}
J. Jensen,
{\it Sur les fonctions convexes et les in{\'e}galit{\'e}s entre les valeurs moyennes},
Acta Math. 30 (1906), 175--193.











\bibitem{KadisonRingroseI}
R. Kadison, J. Ringrose,
{\it Fundamentals of the Theory of Operator Algebras,
Volume I: Elementary Theory},
Graduate Studies in Mathematics 15,
American Mathematical Society, Providence, RI, 1997.



\bibitem{Kallenberg2021}
O. Kallenberg, 
{\it Foundations of Modern Probability,} 
3rd ed., Probability Theory and Stochastic Modelling 99, Springer, Cham, 2021.



\bibitem{Kosaki2013}
H. Kosaki,
{\it Trace Jensen inequality for self-adjoint operators in semi-finite
von Neumann algebras},
Internat. J. Math. 24 (2013), no.~9,
Paper No. 1350075, 15 pp.




\bibitem{Lamperti}
J.~Lamperti,
{\it On the isometries of certain function-spaces},
Pacific J. Math. \textbf{8} (1958), no.~3, 459--466.







\bibitem{LSZ}
S. Lord, F. Sukochev, D. Zanin,
{\it Singular traces: Theory and applications},
De Gruyter Studies in Mathematics 46, De Gruyter, Berlin, 2013.


\bibitem{MazurOrlicz}
S. Mazur, W. Orlicz, {\it On some classes of linear spaces.}
Studia Math. 17, (1959), 97--117.



\bibitem{MO}
J. Musielak, W. Orlicz,  {\it On modular spaces}, 
Studia Math. 18 (1959), no.~1, 49--65.








\bibitem{Petz1986}
D. Petz, \textit{On the equality in Jensen's inequality for operator
convex functions}, Integral Equations Operator Theory \textbf{9}
(1986), no.~5, 744--747.

\bibitem{Petz1987}
D. Petz,
{\it Jensen's inequality for positive contractions on operator algebras},
Proc. Amer. Math. Soc. 99 (1987), no. 2, 273--277.



\bibitem{RahamanTurowska2026}
M. Rahaman, L. Turowska,
{\it Jensen's inequality for partial traces in von Neumann algebras},
Bull. Lond. Math. Soc. \textbf{58} (2026), no. 4, 12 pp.


\bibitem{RX}
Y. Raynaud, Q. Xu, {\it On subspaces of non-commutative $L_p$-spaces}, J. Funct. Anal. \textbf{203} (2003), no. 1, 149--196.




\bibitem{RR}
A. Rieckers, H. Roos,
{\it Implementation of Jordan-isomorphisms for general von Neumann algebras,}
Ann. Inst. H. Poincar\'e Phys. Th\'eor. 50 (1989), no. 1, 95--113.


\bibitem{Rudin}
W. Rudin, \textit{Real and Complex Analysis},
3rd ed., McGraw-Hill, New York, 1987.



\bibitem{Schmudgen}
K. Schm\"udgen, \textit{Unbounded Self-adjoint Operators on Hilbert Space},
Graduate Texts in Mathematics 265, Springer, Dordrecht, 2012.







\bibitem{Takesaki}
M. Takesaki, \textit{Theory of Operator Algebras II},
Encyclopaedia of Mathematical Sciences 125,
Springer-Verlag, Berlin, 2003.

\bibitem{Umegaki1956}
H.~Umegaki,
{\it Conditional expectation in an operator algebra, II,}
Tohoku Math. J. (2) \textbf{8} (1956), no. 1, 86--100.



\bibitem{weigt}
M. Weigt,
{\it Jordan homomorphisms between algebras of measurable operators},
Quaest. Math. 32 (2009), no. 2, 203--214.



\bibitem{Yeadon}
F. Yeadon,
{\it Isometries of non-commutative $L^{p}$-spaces},
Math. Proc. Cambridge Philos. Soc. 90 (1981), no. 1,  41--50.


\end{thebibliography}
\end{document}